\newenvironment{proof}[1][Proof]{\begin{trivlist}
\item[\hskip \labelsep {\bfseries #1}]}{\end{trivlist}}
\newcounter{example}
\newtheorem{lemma}{Lemma}[section]
\newtheorem{theorem}{Theorem}[section]
\newtheorem{proposition}{Proposition}[section]
      \newcommand\figcaption{\def\@captype{figure}\caption}
      \newcommand\tabcaption{\def\@captype{table}\caption}
\title{Nonlinear Model Reduction Based on the Finite Element Method With Interpolated Coefficients: Semilinear Parabolic Equations}
\author[]{Zhu Wang \thanks{Email: wangzhu@ima.umn.edu. URL: \url{http://www.ima.umn.edu/\textasciitilde wangzhu/} }}
\affil[]{Institute for Mathematics and its Applications, University of Minnesota, \\ 
354 Lind Hall, Minneapolis, MN 55455-0134, U. S. A.}
\begin{document}
\maketitle
\begin{abstract}
For nonlinear reduced-order models, especially for those with high-order polynomial nonlinearities or non-polynomial nonlinearities, the computational complexity still depends on the dimension of the original dynamical system. 
As a result, the reduced-order model loses its computational efficiency, which, however, is its the most significant advantage. 
Nonlinear dimensional reduction methods, such as the discrete empirical interpolation method, have been widely used to evaluate the nonlinear terms at a low cost. 
But when the finite element method is utilized for the spatial discretization, nonlinear snapshot generation requires inner products to be fulfilled, which costs lots of off-line time. 
Numerical integrations are also needed over elements sharing the selected interpolation points during the simulation, which keeps on-line time high. 
To overcome these issues and develop an efficient finite element discretization algorithm, in this paper, we extend the finite element method with interpolated coefficients, also known as the group finite element method or the product approximation, to nonlinear reduced-order models. 
The proposed approach approximates the nonlinear function in the reduced-order model by its finite element interpolation, 
which makes coefficient matrices of the nonlinear terms pre-computable and, thus, leads to great savings in the computational efforts. 
Due to the separation of spatial and temporal variables in the finite element interpolation, 
the discrete empirical interpolation method can be directly applied on the nonlinear functions in the same manner as that in the finite difference setting. 
Therefore, the main computational hurdles when applying the discrete empirical interpolation method in the finite element context are conquered. 
We also establish a rigorous asymptotic error estimation, which shows that the proposed approach achieves the same accuracy as that of the standard finite element method under certain smoothness assumptions of the nonlinear functions. 
Several numerical tests are presented to validate the proposed method and verify the theoretical results. 
\end{abstract}

\smallskip
\noindent \textbf{Keywords:}
Nonlinear model reduction, finite element method with interpolated coefficients, proper orthogonal decomposition, discrete empirical interpolation method


\newcommand{\lp}{\left(}
\newcommand{\rp}{\right)}
\newcommand{\lno}{\left\|}
\newcommand{\rno}{\right\|}
\newcommand{\id}{\mathrm{d}}

\newcommand{\cP}{\mathcal{P}}
\newcommand{\mN}{\mathcal{N}}
\newcommand{\oracP}{\overrightarrow{\cP}}
\newcommand{\ou}{\overline{u}}
\newcommand{\opsi}{\overline{\psi}}
\newcommand{\oq}{\overline{q}}
\newcommand{\oT}{\overline{T}}
\newcommand{\E}{\mathbbm{E}}
\newcommand{\orho}{\overline{\rho}}

\newcommand{\s}{\sigma}
\renewcommand{\k}{\kappa}
\newcommand{\p}{\partial}
\newcommand{\om}{\omega}
\newcommand{\Om}{\Omega}
\newcommand{\pOm}{\partial \Omega}
\newcommand{\e}{\epsilon}
\renewcommand{\a}{\alpha}
\renewcommand{\b}{\beta}
   \newcommand{\eps}{\varepsilon}
   \newcommand{\EX}{{\Bbb{E}}}
   \newcommand{\PX}{{\Bbb{P}}}

\newcommand{\nd}{{\nabla \cdot}}

\newcommand{\cF}{{\cal F}}
\newcommand{\cD}{{\cal D}}
\newcommand{\cO}{{\cal O}}

\newtheorem{assumption}{Assumption}[section]
\newtheorem{remark}{Remark}[section]
\newcommand{\kmodel}{k_{\mbox{Model}}}
\newcommand{\obu}{\overline{\bf u}}
\newcommand{\oobu}{\overline{\overline{\bf u}}}
\newcommand{\bP}{{\bf P}}
\newcommand{\bu}{{\bf u}}
\newcommand{\bk}{{\bf k}}
\newcommand{\bs}{{\bf s}}
\newcommand{\be}{{\bf e}}
\newcommand{\oou}{\overline{\overline{u}}}
\newcommand{\op}{\overline{p}}
\newcommand{\of}{\overline{f}}
\newcommand{\obf}{\overline{\bf f}}
\newcommand{\ow}{\overline{w}}
\newcommand{\ov}{\overline{v}}
\newcommand{\ophi}{\overline{\phi}}
\newcommand{\oS}{\overline{S}}
\newcommand{\obS}{\overline{\bf S}}
\newcommand{\bv}{{\bf v}}
\newcommand{\obv}{\overline{\bf v}}
\newcommand{\bc}{{\bf c}}
\newcommand{\by}{{\bf y}}
\newcommand{\bw}{{\bf w}}
\newcommand{\bW}{{\bf W}}
\newcommand{\bU}{{\bf U}}
\newcommand{\obw}{\overline{\bf w}}
\newcommand{\bz}{{\bf z}}
\newcommand{\bZ}{{\bf Z}}
\newcommand{\obZ}{\overline{\bf Z}}
\newcommand{\bff}{{\bf f}}
\newcommand{\bee}{{\bf e}}
\newcommand{\bn}{{\bf n}}
\newcommand{\bx}{{\bf x}}
\newcommand{\bX}{{\bf X}}
\newcommand{\bH}{{\bf H}}
\newcommand{\bV}{{\bf V}}
\newcommand{\bL}{{\bf L}}
\newcommand{\bg}{{\bf g}}
\newcommand{\bj}{{\bf j}}
\newcommand{\br}{{\bf r}}
\newcommand{\grads}{\nabla^s}
\def\PP{{{\rm l}\kern - .15em {\rm P} }}
\def\PN2{{\PP_{N}-\PP_{N-2}}}
\newcommand{\erf}[1]{\mbox{erf}\left(#1\right)}
\newcommand{\D}{\nabla}
\newcommand{\I}{\mathbb{I}}
\newcommand{\N}{\mathbb{N}}
\newcommand{\R}{\mathbb{R}}
\newcommand{\Z}{\mathbb{Z}}
\newcommand{\mathR}{\R}
\newcommand{\mathN}{\N}
\newcommand{\mathZ}{\Z}
\newcommand{\mathI}{\mathbbm{I}}
\newcommand{\btau}{\boldsymbol{\tau}}
\newcommand{\bphi}{\boldsymbol{\varphi}}
\newcommand{\bvarphi}{\boldsymbol{\varphi}}
\newcommand{\bpsi}{\boldsymbol{\psi}}
\newcommand{\bfeta}{\boldsymbol{\eta}}
\newcommand{\blambda}{\boldsymbol{\lambda}}
\newcommand{\bPhi}{\boldsymbol{\Phi}}
\newcommand{\obphi}{\overline {\boldsymbol{\phi}}}
\newcommand{\bomega}{\boldsymbol{\omega}}
\newcommand{\bsigma}{\boldsymbol{\sigma}}
\newcommand{\orhoprime}{\overline{\rho^{\prime}}}
\newcommand{\bus}{{\bf u}^*}
\newcommand{\By}{\mathcal B(\by)}
\newcommand{\eci}[1]{\mathcal E_{#1}}
\newcommand{\dpyi}[1]{\delta_{#1}^+(\by)}
\newcommand{\dmyi}[1]{\delta_{#1}^-(\by)}
\newcommand{\cA}{{\mathcal A(\by)}}
\newcommand{\dyi}[1]{\delta_{#1}(\by)}
\newcommand{\cG}{{\mathcal G(\bx,\by)}}
\newcommand{\cGi}[1]{{\mathcal G_{#1}(\bx,\by)}}
\newcommand{\pti}{\partial_i}
\newcommand{\ptii}[1]{\partial_{#1}}
\newcommand{\ba}{{\bf a}}

\newcommand{\rey}{\mbox{Re}}

\newcommand{\tnp}{t^{k+1}}
\newcommand{\bb}{{\bf b}}
\newcommand{\fnp}{f^{k+1}}
\newcommand{\prp}{P_R^{'}}
\newcommand{\pr}{P_R}
\newcommand{\rn}{r^k}
\newcommand{\ur}{\bu_r}
\newcommand{\urn}{\bu_r^k}
\newcommand{\utn}{\bu_t^k}
\newcommand{\urnp}{\bu_r^{k+1}}
\newcommand{\utnp}{\bu_t^{k+1}}
\newcommand{\un}{\bu^k}
\newcommand{\unp}{\bu^{k+1}}
\newcommand{\vr}{\bv_r}
\newcommand{\wrn}{\bw_r^k}
\newcommand{\etan}{\eta^k}
\newcommand{\etanp}{\eta^{k+1}}
\newcommand{\prn}{\phi_r^k}
\newcommand{\prnp}{\phi_r^{k+1}}
\newcommand{\prz}{\phi_r^{0}}
\newcommand{\prm}{\phi_{r}^{M}}
\newcommand{\sNn}{\sum\limits_{k=0}^{M-1}}
\newcommand{\sN}{\sum\limits_{k=0}^{M}}
\newcommand{\asNn}{\frac{1}{M-1}\sNn}
\newcommand{\asN}{\frac{1}{M}\sN}
\newcommand{\uph}{\upsilon^h}
\newcommand{\half}{\frac{1}{2}}
\newcommand{\wrh}{w_r^h}
\newcommand{\wrkh}{w_{r, k}^h}
\newcommand{\urh}{u_r^h}
\newcommand{\urkh}{u_{r, k}^h}
\newcommand{\urkph}{u_{r, k+1}^h}
\newcommand{\prh}{\phi_r^h}
\newcommand{\prth}{\phi_{r, t}^h}
\newcommand{\prkh}{\phi_{r, k}^h}
\newcommand{\prkph}{\phi_{r, k+1}^h}
\newcommand{\vrh}{v_r^h}
\newcommand{\rrh}{\rho_r^h}
\section{Introduction}
Control and optimization problems in realistic engineering applications often require repeated numerical simulations of large-scale dynamical systems. 
If a fast or real-time control strategy is desired, a brute force direct numerical simulation (DNS) is impractical. 
Therefore, the proper orthogonal decomposition method (POD) has been frequently used to generate a reduced-order model (ROM) that can be utilized as an alternative of the original dynamical system (hereinafter referred to as the full-order model) \cite{Gun03}. 
Such a ROM only contains a handful of degrees of freedom (DOF), yet is computationally feasible and free of storage issues.
The POD method has been successfully applied in many scientific and engineering problems (see, e.g., a brief summarization in \cite{wang2011two}). 
However, for complex dynamical systems, the original promise of the POD as an efficient yet accurate approximation remains to be fulfilled. 
On the one hand, it may achieve erroneous results even when POD basis functions capture most of the system energy \cite{aubry1993preserving}. 
On the other hand, the efficiency is severely limited by the nonlinearity of the system \cite{chaturantabut2010nonlinear}. 
For treating the first issue, research has been done in two main directions: 
(i) to improve the POD basis functions \cite{amsallem2012nonlinear,bergmann2009enablers,bjorklund2012hierarchical,borggaard2006interval,bui2007goal,burkardt2006pod,carlberg2011low,daescu2008dual,iollo2000stability,kunisch2010optimal,Noack2003}; 
(ii) to improve the ROM by modeling the effect of discarded POD basis \cite{borggaard2008reduced,borggaard2011artificial,
cordier2010calibration,couplet2005cro,fang2013nonlinear,galletti2007accurate,Galletti2004,wang2011variational,iollo2000two,kalb2007intrinsic,lumley1996dynamical,noack2011reduced,noack2002low,
noack2005,Pod01,podvin2009proper,PL98,rempfer1996investigations,rempfer2000low,RF94,sirisup2004spectral,ullmann2010pod,wang2011two,wang2011closure,weller2009feedback}. 
To our knowledge, the research for seeking a reliable reduced-order modeling approach for general complex systems is still active, however, beyond the scopes of this paper. 
Instead, we will focus on the second issue and develop an efficient algorithm for the nonlinear ROM.

Indeed, the computational efficiency of POD-ROMs relies on two key components: 
(i) the dimension of the ROM is extremely small; 
(ii) the vectors and matrices in the reduced system can be precomputed. 
For linear systems or nonlinear systems with low order polynomial nonlinearities, both ingredients are satisfied. 
However, for highly nonlinear dynamical systems, the second one does not hold any more 
because matrices associated with the nonlinear terms have to be evaluated and assembled at each time step or iteration. 
Since POD basis functions are global, the evaluation would depend on the dimension of the full-order model. 
The reassembling process would greatly increase the computational cost. 
Therefore, several methods have been proposed to resolve this issue. 
%
%


The two-level algorithms proposed in \cite{wang2011two} are motivated by the observation that only a small number of leading POD basis functions are kept in the ROM, and they have larger length scales than the discarded ones. 
If a computation on a fine mesh is employed to obtain all the POD basis, 
one should be able to use a much coarser mesh to represent the leading POD basis. 
Therefore, in two-level algorithms, nonlinear closure terms are computed on the coarse mesh. 
This way can decrease the computational cost by an order of magnitude, while achieving the same level of accuracy as the simulation on the fine mesh. 
However, the optimal choice of the coarse mesh still needs to be investigated. 

Much work devotes to approximate nonlinear terms at a few selected spatial points or within the neighborhoods around the points.  
The trajectory piecewise-linear method (TPWL), presented in \cite{rewienski2003trajectory,rewienski2006model}, reduces a nonlinear model to a weighted sum of linearized models at selected points along a state trajectory. 
The missing points estimation method (MPE) was developed in \cite{astrid2004reduction,astrid2008missing,willcox2006unsteady}. 
In that approach, the full-order system was first reduced by choosing equations only associated with selected spatial points, restricting the POD basis onto these points, and then projecting the extracted system onto the space spanned by the POD basis. 
The empirical interpolation method (EIM) was first proposed in \cite{barrault2004empirical} for approximating non-affine parameter dependence functions to enable an efficient offline-online computational strategy, and then was further applied to approximate nonlinear functions in \cite{grepl2007erb}. 
The method selects interpolation points by greedy algorithms guided by a posteriori error estimates. 
However, in certain problems, even the most optimal basis set is of large size \cite{eftang2011parameter}, which reduces the efficiency of the algorithm. 
Thus, several improved greedy algorithms have been proposed in \cite{hesthaven2011efficient}. 
The best-points interpolation method (BPIM) was introduced in \cite{nguyen2008best}, which determines interpolation points from a least-square minimization problem. 
The discrete empirical interpolation method (DEIM) was introduced in \cite{chaturantabut2010nonlinear} and analyzed in \cite{chaturantabut2012state}. 
The method combines projection with interpolation and chooses interpolation points from the POD basis of the nonlinear functions. 
Since a certain coefficient matrix can be precomputed when approximating nonlinear functions, the complexity of the  POD-ROM reduces to be proportional to the number of selected spatial indices. 

Another method is the group POD method (GPOD) proposed in \cite{dickinson2010nmr}, which extends the group finite element method to the POD setting. 
In that paper, the authors considered dynamical systems with polynomial nonlinearities, such as the Burgers equations. 
The POD approximation of the quadratic nonlinear term was rewritten in a group format. 
If $r$ POD basis functions are used in the ROM, the GPOD requires $r^3-\frac{1}{2}r^2-\frac{1}{2}r$ flops less than the standard POD implementation in the computation of the nonlinear term. 
However, the approach is limited to polynomial nonlinearities. 

In this report, we focus on developing an efficient finite element (FE) discretization algorithm for nonlinear ROMs. 
In particular, we are interested in applying the DEIM to reduce the intensive computational efforts for evaluating the nonlinear terms. 
However, in the FE setting, there are two major issues that degrade the effectiveness of the DEIM: 
(i) generating nonlinear snapshots, which are to be used for seeking the nonlinear POD basis, requires calculations of the inner product in the nonlinear terms, which costs lots of off-line computation time; 
(ii) the on-line simulation needs evaluations of the inner product over the elements sharing selected DEIM points. 
Repeated numerical integrations will increase the on-line simulation time, especially, in cases such as complex flow simulations when many DEIM points are required to achieve a good approximation. 

To overcome these hurdles, we develop the finite element method with interpolated coefficients (FEIC) for nonlinear POD-ROMs. 
This method is also known as the group finite element method or the product approximation, which has been successfully applied to find numerical solutions to nonlinear partial differential equations  \cite{chen1989error,chen1991approximation,christie1981product,fletcher1983group,douglas1975effect,
larsson1989interpolation,lopez1988stability,
sanz1984interpolation,tourigny1990product,xiong2008convergence}. 
Indeed, we replace the nonlinear function in the POD-ROM with its FE interpolation directly. 
This simple change would lead to great savings in the computation: 
First, the coefficient matrices of the nonlinear terms can be computed beforehand, thus, the evaluation of the nonlinear terms does not involve any numerical quadratures during the on-line simulation; 
Second, for the ROMs discretized by the new approach, the DEIM can be directly applied in the same manner as that in the finite difference context.  
In fact, the nonlinear snapshots become a collection of vectors of nonlinear function values, therefore, 
neither does the nonlinear data generation require any numerical integrations. 
The accuracy of this approach is, of course, restricted to the smoothness of the nonlinear functions. 
However, when the nonlinear functions possess certain smoothness ((H2) and (H3) in Section \ref{sec:analysis}), the FEIC can achieve the same accuracy as the standard FE discretization of the ROM. 
Therefore, the advantages of the new approach over the standard FE discretization of nonlinear POD-ROMs cannot be over-emphasized: 
(i) the FEIC is easier to implement and computationally more efficient; 
(ii) the FEIC achieves the same accuracy when nonlinear functions satisfy smoothness assumptions; 
and
(iii) the FEIC is more suitable for combining with the DEIM and further reduce the computational complexity. 

Note that the GPOD method is based on a similar idea. 
Distinguishing from it, the proposed method in this paper doesn't group any variables in terms of the POD basis. 
Therefore, it fits well for ROMs with polynomial or non-polynomial nonlinearities.  
The rest of this paper is organized as follows: 
a brief introduction to the POD method is presented in Section \ref{sec:pod}; 
the FEIC of POD-ROMs for semilinear parabolic equations is proposed in Section \ref{sec:feic}; 
a rigorous asymptotic error estimate is developed in Section \ref{sec:analysis}; 
several examples are tested in Section \ref{sec:numeric} to numerically demonstrate the accuracy and efficiency of the new approach. 
Wherein, the first two examples are used for validation and the third example is for verification. 
The combination of the proposed approach with the DEIM is discussed in Section \ref{sec:FEIC-DEIM}, whose effectiveness is then illustrated by revisiting the first two examples used in Section \ref{sec:numeric}. 
A few conclusions are drawn in the last section with several ongoing research directions we are pursuing listed. 

\section{The POD Method}\label{sec:pod}  
In this section, we briefly introduce the (time-continuous) proper orthogonal decomposition method. 
For detailed discussions, the reader is referred to \cite{chapelle2012galerkin,HLB12,KV01,KV02,Singler2013,Sir87a}. 

Let $\cal H$ be a real Hilbert space endowed with inner product $(\cdot, \cdot)_{\cal H}$ and norm $\|\cdot\|_{\cal H}$. 
Assume the data $\mathcal{V}$ (so-called snapshots), which is a collection of time-varying functions $y(x, t) \in L^2(0, T; \cal H)$,  the POD method seeks a low-dimensional basis, $\varphi_1(x), \ldots, \varphi_r(x) \in \cal H$, that optimally approximates the data. 
Mathematically speaking, for any positive $r$, the POD basis is determined by minimizing the error between the data and its projection onto the basis, that is, 
\begin{equation}
\min_{ \{\varphi_j\}_{j=1}^r }
\int_0^T 
  \Big\| y(\cdot, t) - 
  \sum_{j=1}^r \left( y(\cdot, t), \varphi_j(\cdot) \right)_{\cal H} \, \varphi_j(\cdot) 
  \Big\|_{\cal H}^2\, \id t,
\label{pod_min}
\end{equation}
subject to the conditions that $(\varphi_i, \varphi_j)_{\cal H} = \delta_{ij}, \ 1 \leq i, j \leq r$, where $\delta_{ij}$ is the Kronecker delta. 
In order to solve \eqref{pod_min}, one can consider the eigenvalue problem
\begin{equation}
K \, v_j = \lambda_j \, v_j \, ,
\label{pod_eigenvalue}
\end{equation}
where $K$ is a compact linear operator that satisfies 
$K v_j(s) = \int_0^T (y(\cdot, t), y(\cdot, s))_{\cal H}\,v_j(t)\, \id t$,
$v_j$ is the $j$-th eigenvector, and
$\lambda_j\leq \ldots\leq\lambda_2\leq\lambda_1$ are positive eigenvalues. 

It can then be shown that the solution
of \eqref{pod_min} is given by
\begin{equation}
\varphi_{j}(\cdot) = \frac{1}{\sqrt{\lambda_j}} \, \int_0^T v_j(t) \, y(\cdot , t) \, \id t, 
\quad 1 \leq j \leq r.
\label{pod_basis_formula}
\end{equation}

\begin{proposition}(\cite{KV01})
Let the POD basis given by \eqref{pod_basis_formula} be of rank $r$, the POD projection error satisfies
\begin{equation}
\int_0^T
  \Big\| y(\cdot, t) - 
  \sum_{j=1}^r \left( y(\cdot, t), \varphi_j(\cdot) \right)_{\cal H} \, \varphi_j(\cdot) 
  \Big\|_{\cal H}^2 \, \id t
  = \sum\limits_{j>r} \lambda_j.
 \label{eq:poderr} 
\end{equation}
\label{pr:poderr}
\end{proposition}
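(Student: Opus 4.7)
The plan is to expand the squared norm of the projection error using orthonormality of $\{\varphi_j\}_{j=1}^r$, reduce it to an expression involving inner products $(y(\cdot,t),\varphi_j)_{\cal H}$, and then exploit the eigenvalue relation \eqref{pod_eigenvalue} together with the defining formula \eqref{pod_basis_formula} to express everything in terms of the spectrum of $K$.

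First I would use the orthonormality $(\varphi_i,\varphi_j)_{\cal H}=\delta_{ij}$ to obtain the pointwise identity
\begin{equation*}
\Bigl\| y(\cdot,t)-\sum_{j=1}^r (y(\cdot,t),\varphi_j)_{\cal H}\,\varphi_j\Bigr\|_{\cal H}^2
=\|y(\cdot,t)\|_{\cal H}^2-\sum_{j=1}^r (y(\cdot,t),\varphi_j)_{\cal H}^{2}.
\end{equation*}
Integrating in $t$ then reduces the statement to proving the two identities
\begin{equation*}
\int_0^T (y(\cdot,t),\varphi_j)_{\cal H}^{2}\,\id t=\lambda_j,\qquad
\int_0^T \|y(\cdot,t)\|_{\cal H}^{2}\,\id t=\sum_{j\ge 1}\lambda_j.
\end{equation*}

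The first identity is the heart of the argument. Substituting \eqref{pod_basis_formula} and interchanging the integrals (the kernel is the continuous symmetric form $(y(\cdot,t),y(\cdot,s))_{\cal H}$, so Fubini applies),
\begin{equation*}
(y(\cdot,t),\varphi_j)_{\cal H}
=\frac{1}{\sqrt{\lambda_j}}\int_0^T v_j(s)\,(y(\cdot,t),y(\cdot,s))_{\cal H}\,\id s
=\frac{1}{\sqrt{\lambda_j}}(Kv_j)(t)=\sqrt{\lambda_j}\,v_j(t),
\end{equation*}
where the last step uses \eqref{pod_eigenvalue}. Squaring and integrating, together with the $L^2(0,T)$-orthonormality of the eigenvectors $v_j$ of the compact self-adjoint operator $K$, yields $\int_0^T(y(\cdot,t),\varphi_j)_{\cal H}^2\,\id t=\lambda_j$.

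The second identity is the classical trace computation: writing $K$ through its integral kernel, its trace equals $\int_0^T (y(\cdot,t),y(\cdot,t))_{\cal H}\,\id t=\int_0^T\|y(\cdot,t)\|_{\cal H}^2\,\id t$, while spectrally it equals $\sum_{j\ge 1}\lambda_j$. Subtracting the first sum from the second gives exactly $\sum_{j>r}\lambda_j$, as claimed. The only point requiring care is the justification of the trace/Mercer step, i.e.\ that $K$ is trace class with $\mathrm{tr}(K)=\int_0^T\|y\|_{\cal H}^2\,\id t$; this is the one nonroutine step and is the reason the statement is cited from \cite{KV01} rather than proved from scratch here.
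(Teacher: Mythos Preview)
Your argument is correct and is essentially the standard proof from \cite{KV01}. Note, however, that the paper does not actually supply a proof of this proposition: it is stated with a citation to \cite{KV01} and used as a known result, so there is no in-paper proof to compare against. Your sketch recovers precisely the argument one finds in that reference, including the key identity $(y(\cdot,t),\varphi_j)_{\cal H}=\sqrt{\lambda_j}\,v_j(t)$ and the trace computation for $K$.
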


\begin{remark}
In practice, discrete data is always considered. The POD basis for an ensemble of snapshots, 
$\mathcal{V}=\text{ span }\{y(x, t_1), \ldots, y(x, t_M)\}$, 
is to minimize the projection error 
\begin{equation}
\min_{ \{\varphi_j\}_{j=1}^r }\sum_{\ell=1}^M 
  \Big\| y(\cdot, t_{\ell}) - 
  \sum_{j=1}^r \left( y(\cdot, t_{\ell}), \varphi_j(\cdot) \right)_{\cal H} \, \varphi_j(\cdot) 
  \Big\|_{\cal H}^2.
\label{pod_min_discete}
\end{equation}
The solution can be obtained by solving the eigenvalue problem 
$K \, v_j = \lambda_j \, v_j$ first, 
where $K \in \R^{M \times M}$ is the snapshot correlation matrix with 
$\displaystyle K_{\ell k} = \left( y(\cdot, t_{\ell}), y(\cdot, t_k) \right)_{\cal H}$ and $1\leq \ell, k\leq M$,
then the POD basis is given by $\varphi_{j}(\cdot) = \frac{1}{\sqrt{\lambda_j}} \, \sum_{{\ell}=1}^{M} (v_j)_{\ell} \, y(\cdot , t_{\ell})$, 
$1 \leq j \leq r$.
\end{remark}

\begin{remark} 
Finite element solutions are commonly utilized as the snapshots, that is, the data $y(x, t) = \sum\limits_{\iota=1}^{n_{dof}}{\bf Y}_{\iota}(t) h_{\iota}(x)$, where {\bf Y}(t) is the approximation solution vector at time $t$, $h_{\iota}(x)$ is the $\iota$-th FE nodal basis, and $n_{dof}$ is the number of DOF in the spatial discretization.  
Then the POD basis can be written as  
\begin{equation}
\varphi_j(x)= \sum\limits_{\iota=1}^{n_{dof}}{\bf Q}_{\iota j} h_{\iota}(x),
\label{eq:podbasis}
\end{equation} 
where the coefficient matrix ${\bf Q}$ is with the entry ${\bf Q}_{\iota j}= \frac{1}{\sqrt{\lambda_j}} \, \int_0^T v_j(t) \, Y_{\iota}(t) \, \id t$ for time-continuous data or ${\bf Q}_{\iota j}= \frac{1}{\sqrt{\lambda_j}} \, \sum_{{\ell}=1}^{M} (v_j)_{\ell} \, Y_{\iota}(t_{\ell})$ for time-discrete data. 
For a detailed discussion on implementing the POD method in the FE setting, readers are referred to \cite{KV99}.
\end{remark}
\section{The POD-ROM and Finite Element Discretizations} \label{sec:feic}
Let $\Om$ be a convex domain in $\R^d$ with the smooth boundary $\p \Om$, $d = 1, 2, 3$. 
Also let $\mathcal{T}_h$ be a collection of quasi uniform elements that partitions the domain. 
The elements are line segments if $d=1$, triangles if $d= 2$, and polyhedra if $d= 3$. 
The parameter $h$ is the maximal diameter of the elements.  
Denoted by  $X$ the space $L^2(\Om)$ equipped with the inner product $(\cdot, \cdot)$ and norm $\|\cdot\|$; 
by $V$ the Sobolev space $H^1_0 (\Om)=\{v|v\in H^1(\Om), v|_{\p \Om} =0\}$ with $H^1$ semi-norm $|\cdot|_1$ and $H^1$ norm $\|\cdot\|_1$; 
and by $V^h$ the space of piecewise continuous functions on $\Om$ that reduce to polynomials of degree $\leq m$ on each element of $\mathcal{T}_h$, which satisfies $V^h\subset V$. 
We assume the semilinear problems that we will consider in this section admit a unique solution $u = u(x, t)$ on the time interval $[0, T]$, which ranges in $V$. 

We consider the equivalent variational problems of semilinear parabolic equations with homogeneous boundary conditions: 
To find $u(x, t) \in V$, such that,  either 
\begin{equation}
\left( \frac{\p u}{\p t}, v \right) + a\left(u, v\right) + \left( N(u), v \right) = ( f, v ),\qquad \forall\, v\in V, \label{eq:pdewf1}
\end{equation}  
or
\begin{equation}
\left( \frac{\p u}{\p t}, v \right) + a\left(u, v\right) + \left( N(u), \nabla v \right) = ( f, v ),\qquad \forall\, v\in V, \label{eq:pdewf2}
\end{equation}  
with the initial condition 
\begin{equation*}
u(x, 0) = u_0(x), \qquad \forall\, x\in \Om.
\end{equation*}  
Where $f= f(x,t)$ is a source term independent with $u$, the bilinear form $a(\cdot, \cdot): V\times V \rightarrow \R$ is continuous and coercive, that is, 
there exist constants $\alpha$ and $\beta$ such that
\begin{eqnarray}
|a(u, v)|
&\leq& \alpha \, \| u \|_1 \, \| v \|_1,
\qquad \forall \, u, \, v \in V,
\label{a_continuity} \\
|a(u, u)|
&\geq& \beta \, \| u \|_1^2,
\qquad \forall \, u \in V.
\label{a_coercivity}
\end{eqnarray}
$N(u)$ is a nonlinear function of $u$. 
In particular, we are interested in cases in which $N(u)$ possesses either a non-polynomial nonlinearity or a high order polynomial nonlinearity. 
Due to the similarity between \eqref{eq:pdewf1} and \eqref{eq:pdewf2}, to shorten the presentation, we only discuss \eqref{eq:pdewf1} in the sequel, but will comment a theoretical result of \eqref{eq:pdewf2} in Remark \ref{rem:pdewf2} and test a problem governed by \eqref{eq:pdewf2} in Section \ref{sec:numeric}. 

Many well-established methods can be used for seeking a numerical solution to the equation \eqref{eq:pdewf1}. 
However, when repeated numerical simulations are required, direct simulations result in a huge computational cost and become infeasible. 
Therefore, the POD method has been widely used for generating a reduced-order model.  

\subsection{The POD-ROM}
Given snapshots consisting of either numerical solutions or experimental data, the POD basis functions $\{ \varphi_1(x), \ldots, \varphi_r(x) \}$ are determined by \eqref{pod_min}-\eqref{pod_basis_formula}, where $\mathcal{H}=L^2$ in our considerations. 
The associated POD approximation of $u(x, t)$ is given by
   \begin{equation}
     u_r(x, t) \equiv \sum_{j=1}^r \varphi_j(x) a_j(t),  
     \label{ch1:podapproximation}
   \end{equation}
   where $\{{a}_j(t)\}_{j=1}^r$ are the sought time-varying POD basis coefficient functions.  
Substituting the POD approximation \eqref{ch1:podapproximation} into \eqref{eq:pdewf1}, applying the Galerkin method, and considering the POD basis functions are orthonormal, we obtain the Galerkin projection-based POD-ROM (POD-G-ROM): 
to find $u_r(x, t) \in V_r = \text{span}\left\{\varphi_1, \ldots, \varphi_r\right\}$, such that,  
\begin{equation}
\left( \frac{\p u_r}{\p t}, v_r \right) + a\left(u_r, v_r\right) + \left(N(u_r), v_r \right) = \left( f, v_r \right), \qquad \forall\, v_r \in V_r, \label{eq:podg}
\end{equation}
and
\begin{equation*}
u_r(x, 0) = u_{0, r}(x) \in V_r \qquad \forall\, x\in \Om.
\end{equation*}
Let ${\ba}(t)=[a_1(t), \ldots, a_r(t)]^\intercal$, the POD-G-ROM can be rewritten in terms of POD basis functions as: 
\begin{equation}
\dot{\ba} = {\bf A} + {\bf B} \ba + {\bf C}(\ba), \label{eq:podg_a} \\
\end{equation} 
with, for example,  
\begin{equation*}
\ba(0) = \left(u_0(x), \Phi(x) \right). \nonumber
\end{equation*} 
Where ${\bf A}_k = (f, \varphi_k)$, ${\bf B}_{jk} = -a(\varphi_j, \varphi_k)$, 
$\left({\bf C}(\ba)\right)_{k}= -\left(N(\sum_{j=1}^r \varphi_j a_j(t)) , \varphi_k \right)$, for $k=1, \ldots, r$, and $\Phi = [\varphi_1, \ldots, \varphi_r]^\intercal$. 
The resulted dynamic system \eqref{eq:podg_a} is of dimension $r$, which is much smaller than the number of DOF in the full-order model. 
Once $\ba$ is obtained, the POD approximation solution $u_r$ can be recovered by \eqref{ch1:podapproximation}.

The most significant advantage of the ROM is its computational efficiency. 
Indeed, the matrix ${\bf B}$ can be precomputed. 
In certain cases, we can also compute matrices in ${\bf C}$ beforehand.  
For example, in Navier-Stokes equations, one can write 
$\left({\bf C}(\ba)\right)_{k}= \ba^\intercal {\bf C}^k \ba $, where
$[{\bf C}^k]_{ij} = -\sum_{i=1}^r\sum_{j=1}^r(\varphi_i\cdot\nabla \varphi_j, \varphi_k)$. 
The matrices only need to be computed once and can be used repeatedly in on-line simulations. 
However, this attractive property does not hold when the nonlinear function $N(u)$ is with a higher oder polynomial nonlinearity or a non-polynomial one. 
As a result, the computational efficiency of the ROM decays. 
Especially, when the FE is used for a spatial discretization. 
Therefore, in the rest of this paper, we restrict ourselves to the FE methods of the nonlinear ROMs \eqref{eq:podg} and develop a new efficient FE discretization algorithm.

\subsection{Finite Element Discretizations}  
Let $V_r^h = \text{span}\left\{\varphi_1^h, \ldots, \varphi_r^h\right\}$, where $\varphi_j^h$ is the finite element discretization of $\varphi_j$, $j= 1, \ldots, r$. The standard finite element discretization of the POD-G-ROM \eqref{eq:podg} (POD-FEM) is to find $\urh(x, t) \in V_r^h$, such that, 
\begin{equation}
\left( \frac{\p \urh}{\p t}, \vrh \right) + a\left(\urh, \vrh\right) + \left(N(\urh), \vrh \right) = ( f, \vrh ),\qquad \forall\, \vrh\in V_r^h, \label{eq:podfem} 
\end{equation}
and 
\begin{equation*}
\urh(x, 0) = u^h_{0, r}(x) \in V_r^h,\qquad \forall\, x\in \Om. 
\end{equation*}
Represented by $\mN_{FE}$ the nonlinear term in \eqref{eq:podfem}, whose $k$-th entry is as follows.  
\begin{eqnarray}
\left(\mN_{FE}\right)_k &=& 
\Big(N\left(u_r^h(x, t)\right),\, \varphi_k^h(x)\Big)
 \nonumber \\ 
&=&
\Big(N\big(\sum\limits_{j=1}^{r}\sum\limits_{\iota=1}^{n_{dof}}{\bf Q}_{\iota j} h_\iota(x) a_j(t)\big),\,  
\sum\limits_{s=1}^{n_{dof}}{\bf Q}_{sk} h_s(x)\Big) .
\label{eq:nonlfem}
\end{eqnarray}
Obviously, the evaluation of \eqref{eq:nonlfem} requires numerical quadratures at each time step of the numerical simulation. 
Suppose the complexity for evaluating the nonlinear function $N(u)$ with $\theta$ components is $\mathcal{O}(\varrho(\theta))$ and $n_q$ quadrature points are used in each integral evaluation, the total complexity in assembling $\mN_{FE}$ at each time step (or iteration) is around $\mathcal{O}\left(2r n_q[\varrho(2r n_{dof})+n_{dof}]\right)$ flops.

To improve the efficiency of nonlinear ROMs, we propose a method to use the FEIC for a spatial discretization of the POD-G-ROM \eqref{eq:podg}. 
The FEIC, also known as the product approximation technique \cite{christie1981product} or group finite element method \cite{fletcher1983group}, has been used as an alternative tool of the FE method for solving 
nonlinear elliptic problems \cite{lopez1988stability,sanz1984interpolation}, 
nonlinear parabolic problems \cite{chen1989error,chen1991approximation,douglas1975effect,larsson1989interpolation}, 
and nonlinear hyperbolic problems \cite{tourigny1990product,xiong2008convergence}. 
This approach replaces the nonlinear function by its interpolant in the finite dimensional space, which leads to great savings in the computational efforts while keeping the accuracy. 
To our knowledge, this is the {\em first} time that the FEIC is applied in the POD setting with a rigorous error estimate provided. 

Define the interpolation operator $\mathcal{I}^h: C(\overline{\Om})\rightarrow S_h$. 
The interpolant of $N(\cdot)$ in the FE space satisfies 
\begin{equation}
(\mathcal{I}^h N)(u(x_i, t)) = N(u(x_i, t)),
\label{feic_inter}
\end{equation}
where $x_i$ is a node in the finite element mesh, $i = 1, \ldots, n_{dof}$. 
The finite element method with interpolated coefficients of the POD-G-ROM \eqref{eq:podg} (POD-FEIC) is the following: 
to find $u_r^h(x, t) \in V_r^h$, such that, 
\begin{equation}
\left( \frac{\p u_r^h}{\p t}, v_r^h \right) + a\left(u_r^h, v_r^h\right) + \left(\mathcal{I}^h N (u_r^h), v_r^h \right) = \left( f, v_r^h \right), \qquad \forall\, v_r^h \in V_r^h, \label{eq:podgfeic}
\end{equation}  
and 
\begin{equation}
u_r^h(\cdot, 0) = u^h_{0, r}(x) \in V^h_r, \qquad \forall x\in \Om.
\label{eq:podgfeic_t0}
\end{equation} 
Different from the standard FE discretization, the nonlinear function $N(\urh)$ in \eqref{eq:podfem} is replaced by the interpolation $\mathcal{I}^h N(\urh)$ in \eqref{eq:podgfeic}.
The $k$-th row of the nonlinear term in the new numerical discretization, $\mN_{FEIC}$, reads:  
\begin{eqnarray*}
\left(\mN_{FEIC}\right)_k &=& 
\Big( (\mathcal{I}^h N)(\urh(x, t) ),\, \varphi_k^h(x)\Big) \\ 
&=&
\Big(\sum\limits_{\iota=1}^{n_{dof}}N\big(\sum\limits_{j=1}^{r}{\bf Q}_{\iota j} a_j(t)\big)h_{\iota}(x),\,  
\sum\limits_{s=1}^{n_{dof}}{\bf Q}_{sk} h_s(x)\Big) \\
&=&
\Big(\sum\limits_{\iota=1}^{n_{dof}}h_{\iota}(x),\,  
\sum\limits_{s=1}^{n_{dof}}{\bf Q}_{sk} h_s(x)\Big)\, N\Big(\sum\limits_{j=1}^{r}{\bf Q}_{\iota j} a_j(t)\Big), 
\end{eqnarray*}
which can then be rewritten as follows. 
\begin{equation}
\mN_{FEIC} =   {\bf Q}^\intercal\, {\bf M}^h\, N\big({\bf Q}\, {\bf a}(t)\big), \label{eq:nonlfeic}
\end{equation}
where ${\bf M}^h$ is the FE mass matrix with $\left[{\bf M}^h\right]_{\iota s}=(h_{\iota}, h_s)$. 

Compare with \eqref{eq:nonlfem}, the advantage of the new approach is clear: {\em the matrix ${\bf Q}^\intercal\, {\bf M}^h$ in \eqref{eq:nonlfeic} only needs to be computed once. At each time step or iteration, only values of the nonlinear function need to be calculated without involving any numerical quadratures.}
The total complexity in each time step (or iteration) is reduced to $\mathcal{O}(\varrho(2r n_{dof})+2rn_{dof})$ flops. 
The efficiency can be further improved by combining with the DEIM, which will be discussed in Section \ref{sec:FEIC-DEIM}.

\section{Error Estimates}\label{sec:analysis}
In this section, we analyze the numerical error of the POD-FEIC model \eqref{eq:podgfeic}. 
Since the obvious distinction between the POD-FEIC \eqref{eq:podgfeic} and the POD-FEM \eqref{eq:podfem} lies in the special spatial discretization of the nonlinear term, we will consider the semidiscrete model (continuous in time) and only focus on errors caused by the spatial discretization and the POD truncation.
To provide the analysis, we proceed in three steps: 
We begin by gathering a few preliminary results that will be used;  
We then prove an error estimate for the $L^2$ projection of $u$ in Lemma \ref{lemma_ritz};  
Finally, we establish the approximation error of \eqref{eq:podgfeic} in Theorem \ref{thm:err}. 

For clarity of notation, in the sequel, we will denote by $C$, a generic constant that does not depend on the mesh size $h$ and the number of POD basis functions $r$ in the ROM. 

\subsection{Step 1: Preliminary Results}
To derive the error estimation, we first make a few necessary hypotheses and present some preliminary results. 
For the solution $u$ and the nonlinear function $N(u)$ of \eqref{eq:pdewf1}, we assume: 
\begin{enumerate}[(H1)]
\item The solution $u$ belongs to $C^1(0, T; H^{m+1}(\Om)\bigcap H^1_0(\Om))$,
\item $N(u)$ belongs to $C(0, T; H^{m+1}(\Om))$, 
\item $N(u)$ is locally Lipschitz, that is, let $M= \|u\|_{L^{\infty}(\Om)}+1$, there exists $L = L(M)$ such that 
\begin{equation}
|N(u) - N(v)|\leq L\, |u-v|, \quad \text{for all } u, v \in (-M, M).
\label{eq:lipschitz}
\end{equation}
\end{enumerate}
 
Based on the FE method theory, we have the following interpolation error \cite{BS02}: 
\begin{lemma}
For $0 \leq \gamma \leq m+1$ and $1\leq p\leq \infty$, if $v\in C(\overline{\Om})\bigcap \prod\limits_{K\in \mathcal{T}_h} W^{m+1}_p(K)$, there exists a constant $C$ independent of $h$ such that
\begin{equation}
\| v - \mathcal{I}^h v \|_{\gamma, p}
\leq C \, h^{m+1-\gamma} \, \| v \|_{m+1, p}.
\label{eq:feminterror}
\end{equation}
\label{assumption_approximability}
\end{lemma}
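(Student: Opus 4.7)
The plan is to follow the classical reference-element plus Bramble-Hilbert argument for finite element interpolation error estimates, which is the standard route in any FE textbook and is the reason the paper simply attributes the result to [BS02]. First I would localize: since $\|v - \mathcal{I}^h v\|_{\gamma, p}^p = \sum_{K \in \mathcal{T}_h} \|v - \mathcal{I}^h v\|_{\gamma, p, K}^p$ (with the obvious essential-sup modification if $p = \infty$), it suffices to prove the per-element bound
\begin{equation*}
\|v - \mathcal{I}^h v\|_{\gamma, p, K} \leq C\, h_K^{m+1-\gamma}\, |v|_{m+1, p, K},
\end{equation*}
and then sum and invoke quasi-uniformity of $\mathcal{T}_h$ (assumed at the start of Section~\ref{sec:feic}) to replace $h_K$ by the global parameter $h$ in the constant.

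Next I would reduce each per-element estimate to one on a fixed reference element $\hat{K}$ via an affine diffeomorphism $F_K : \hat{K} \to K$. Because $\mathcal{I}^h$ is a local nodal interpolant, it commutes with $F_K$, i.e.\ $\widehat{\mathcal{I}^h v} = \hat{\mathcal{I}}\,\hat{v}$, and a standard chain-rule computation gives the seminorm scaling $|\hat{w}|_{\gamma, p, \hat{K}} \leq C\, h_K^{\gamma - d/p}\, |w|_{\gamma, p, K}$ together with its inverse. Together these reduce the desired estimate to the reference-element inequality
\begin{equation*}
\|\hat{v} - \hat{\mathcal{I}}\,\hat{v}\|_{\gamma, p, \hat{K}} \leq C\, |\hat{v}|_{m+1, p, \hat{K}}.
\end{equation*}
To prove it I would exploit the fact that $\hat{\mathcal{I}}$ preserves polynomials of degree at most $m$: for any $\hat{q} \in P_m$,
$\hat{v} - \hat{\mathcal{I}}\,\hat{v} = (I - \hat{\mathcal{I}})(\hat{v} - \hat{q}),$
so using continuity of the nodal interpolant from $W^{m+1}_p(\hat{K})$ into $W^{\gamma}_p(\hat{K})$ yields $\|\hat{v} - \hat{\mathcal{I}}\,\hat{v}\|_{\gamma, p, \hat{K}} \leq C \inf_{\hat{q} \in P_m} \|\hat{v} - \hat{q}\|_{m+1, p, \hat{K}}$. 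The Bramble-Hilbert lemma then bounds this infimum by $C\, |\hat{v}|_{m+1, p, \hat{K}}$, finishing the reference-element step.

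Putting the pieces together -- scaling back through $F_K$, summing over $K$, and applying quasi-uniformity -- produces the stated bound with a constant independent of $h$. The main obstacle is not analytic but bookkeeping: carefully tracking the powers of $h_K$ picked up when affinely pulling seminorms between $K$ and $\hat{K}$, and ensuring that the nodal interpolation operator is well-defined on the relevant space. The latter is why the hypothesis is stated as $v \in C(\overline{\Om}) \cap \prod_{K \in \mathcal{T}_h} W^{m+1}_p(K)$ rather than merely $v \in W^{m+1}_p(\Om)$: continuity of $v$ makes the nodal values unambiguous across element interfaces, while the piecewise Sobolev regularity makes the Bramble-Hilbert step valid element by element. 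Because the full argument is standard, in the final paper I would simply cite [BS02, Ch.~4] rather than reproducing it.
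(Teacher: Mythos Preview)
Your proposal is correct and matches the paper's treatment: the paper does not prove this lemma at all but simply states it as a known result from finite element theory with a citation to~\cite{BS02}, and your outline of the standard reference-element plus Bramble--Hilbert argument is precisely what that reference contains. Your concluding remark that you would simply cite~[BS02, Ch.~4] rather than reproduce the argument is exactly what the paper does.
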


The snapshots in our considerations are composed of  the FE solutions $u^h(x, t)$, which solves the following approximation problem: To find $u^h(x, t) \in V^h$, such that, 
\begin{equation}
\left( \frac{\p u^h}{\p t}, v^h \right) + a\left(u^h, v^h\right) + \left(N(u^h), v^h \right) = ( f, v^h ),\qquad \forall\, v^h\in V^h, \label{eq:fem1} 
\end{equation}
with $u^h(x, 0) = u^h_0(x) \in V^h$, $\forall x\in \Om$. 
The FE approximation theory for semilinear parabolic equations is well-developed (see, e.g., Chapter 14 in \cite{thomee2006galerkin} and reference therein). One can easily modify the proof of Theorem 14.1 in \cite{thomee2006galerkin} and obtain the following error estimate for the FE solution. 
\begin{lemma}
Let $u^h$ and $u$ be the solutions of \eqref{eq:fem1} and \eqref{eq:pdewf1} under the assumptions of (H1) and (H3), respectively. With appropriately chosen $u_0^h$ in the FE approximation, we have , with $C = C(u, T)$,  
\begin{equation}
\|u^h(t) - u(t)\| + h\|u^h(t)-u(t)\|_1 \leq Ch^{m+1}, \quad \text{for } t \in [0, T].
\label{eq:femerror}
\end{equation}
\end{lemma}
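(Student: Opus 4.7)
The plan is to follow the classical elliptic-projection argument of Wheeler and Thom\'ee. First I introduce the Ritz projection $R_h: V \to V^h$ defined by the orthogonality $a(R_h u - u, v^h) = 0$ for all $v^h \in V^h$. C\'ea's lemma combined with Lemma \ref{assumption_approximability} and a standard duality argument yields $\|R_h u - u\| + h\|R_h u - u\|_1 \leq C h^{m+1}\|u\|_{m+1}$, and differentiating the defining identity in $t$ (legitimate under (H1)) gives the same bound for $\p_t(R_h u - u)$. I then split the error as $u^h - u = \theta + \rho$ with $\theta := u^h - R_h u \in V^h$ and $\rho := R_h u - u$; the bound on $\rho$ is in hand, so the work reduces to controlling $\theta$.

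Subtracting \eqref{eq:fem1} from \eqref{eq:pdewf1} restricted to $V^h$ and using the definition of $R_h$ to eliminate $a(\rho,v^h)$, the correction $\theta$ satisfies
\[
(\theta_t, v^h) + a(\theta, v^h) = -(\rho_t, v^h) - \big(N(u^h) - N(u),\, v^h\big), \qquad \forall\, v^h \in V^h.
\]
Testing with $v^h = \theta$ and invoking coercivity \eqref{a_coercivity} gives
\[
\tfrac{1}{2}\tfrac{d}{dt}\|\theta\|^2 + \beta\,\|\theta\|_1^2 \leq \|\rho_t\|\,\|\theta\| + \big|\big(N(u^h) - N(u),\, \theta\big)\big|.
\]
The main difficulty is the nonlinear term, because (H3) only supplies a Lipschitz bound locally. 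I close the argument with a bootstrap: let $t^\star$ be the largest time up to which $\|u^h(\cdot,t)\|_{L^\infty(\Om)} \leq \|u\|_{L^\infty(0,T;L^\infty(\Om))} + 1 =: M$ holds. On $[0, t^\star]$, (H3) yields $|(N(u^h) - N(u),\theta)| \leq L(M)\,\|\theta\|\,(\|\theta\|+\|\rho\|)$; Young's inequality turns the energy identity into $\frac{d}{dt}\|\theta\|^2 \leq C\big(\|\theta\|^2 + \|\rho_t\|^2 + \|\rho\|^2\big)$, and choosing $u_0^h = R_h u_0$ so that $\theta(0) = 0$, Gronwall delivers $\|\theta(t)\|^2 \leq C h^{2m+2}$ throughout $[0, t^\star]$.

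It remains to close the bootstrap and to recover the $H^1$ part of the estimate. For the former, the standard inverse inequality $\|\theta\|_{L^\infty(\Om)} \leq C h^{-d/2}\|\theta\|$ together with the just-established $\|\theta\| \leq C h^{m+1}$ and the $L^\infty$ bound on $\rho$ yields $\|u^h\|_{L^\infty(\Om)} \leq \|u\|_{L^\infty(\Om)} + C h^{m+1-d/2}$, which remains strictly below $M$ once $h$ is sufficiently small (valid because $m+1 > d/2$ for the admissible combinations of $m$ and $d$); a standard continuation argument then forces $t^\star = T$. A pointwise-in-$t$ $H^1$ control of $\theta$ follows directly from $\|\theta\|_1 \leq C h^{-1}\|\theta\| \leq C h^m$, and a triangle inequality against the $\rho$ estimates produces \eqref{eq:femerror}. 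The principal obstacle is precisely this bootstrap: legitimizing the use of the local Lipschitz constant $L(M)$ depends on the smallness condition $h^{m+1-d/2}\ll 1$, which must be tracked when absorbing constants into the generic $C = C(u, T)$.
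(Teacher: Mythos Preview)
Your proposal is correct and follows precisely the classical Wheeler--Thom\'ee elliptic-projection argument. The paper itself does not write out a proof of this lemma: it simply states that the result follows by ``easily modify[ing] the proof of Theorem 14.1 in \cite{thomee2006galerkin},'' and your Ritz-projection splitting $u^h - u = \theta + \rho$, energy estimate with $v^h = \theta$, local-Lipschitz bootstrap via the inverse inequality, and Gronwall closure are exactly that argument, so there is nothing substantive to compare.
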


To estimate the interpolation of nonlinear terms in the FEIC, we utilize an auxiliary 'euclidean' norm introduced in \cite{tourigny1990product} on $C(\overline{\Om})$:
\begin{equation}
\|\chi\|_h = \left[\,\sum_{i=1}^{n_{dof}}|\chi(x_i)|^2\,\right]^{\frac{1}{2}}.
\label{def:newnorm}
\end{equation} 
For any $\chi\in V_h$, since the space is of finite dimensions, we have the equivalence between $\|\chi\|$ and $\|\chi\|_h$ on the reference element. 
With a straightforward homogeneity argument (or scaling argument \cite{BS02}), we have the following lemma: 
\begin{lemma}
\label{lem:newnorm}
There exist two strictly positive constants $c_1$ and $c_2$ independent of $h$ such that 
\begin{equation}
c_1 h^{\frac{d}{2}}\|\chi\|_h \leq \|\chi\| \leq c_2 h^{\frac{d}{2}}\|\chi\|_h, 
\label{eq:newnorm}
\end{equation}
for all $\chi\in V_h$. 
\end{lemma}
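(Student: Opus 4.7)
The plan is to prove this equivalence by a standard scaling argument: establish it first on a fixed reference element using finite-dimensionality, then transfer to each physical element via the affine map while tracking how each norm scales with $h$, and finally assemble globally using the quasi-uniformity of $\mathcal{T}_h$.

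On a reference element $\hat{K}$, the local polynomial space of degree $\leq m$ is finite-dimensional, so all norms on it are equivalent. In particular, there exist $\hat{c}_1,\hat{c}_2>0$ such that for any $\hat{\chi}$ in this space
$$\hat{c}_1\,\|\hat{\chi}\|_{\hat{h}} \;\leq\; \|\hat{\chi}\|_{L^2(\hat{K})} \;\leq\; \hat{c}_2\,\|\hat{\chi}\|_{\hat{h}},$$
where $\|\hat{\chi}\|_{\hat{h}}^2=\sum_{j}|\hat{\chi}(\hat{x}_j)|^2$ sums over the local nodes of $\hat{K}$.

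Next, for a physical element $K\in\mathcal{T}_h$, let $F_K:\hat{K}\to K$ be the affine map with Jacobian $J_K$, and set $\chi = \hat{\chi}\circ F_K^{-1}$. A change of variables yields $\|\chi\|_{L^2(K)}^2 = |\det J_K|\,\|\hat{\chi}\|_{L^2(\hat{K})}^2$, and by quasi-uniformity $|\det J_K|\sim h^d$. The local discrete norm, being a sum of nodal values, is invariant under $F_K$, i.e.\ $\|\hat{\chi}\|_{\hat{h}} = \|\chi\|_{h,K}$, where $\|\chi\|_{h,K}^2$ denotes the sum of $|\chi(x_i)|^2$ over nodes $x_i\in K$. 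Combining, there exist constants $c_1',c_2'$ independent of $K$ and $h$ with
$$c_1'\,h^{d/2}\,\|\chi\|_{h,K} \;\leq\; \|\chi\|_{L^2(K)} \;\leq\; c_2'\,h^{d/2}\,\|\chi\|_{h,K}.$$

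Finally, I would square, sum over $K\in\mathcal{T}_h$, and compare $\sum_{K}\|\chi\|_{h,K}^2$ with $\|\chi\|_h^2$. These differ only because a shared node $x_i$ is counted once in $\|\chi\|_h^2$ but once per containing element in the elementwise sum; by quasi-uniformity this multiplicity is bounded above by a constant $K_0$ and below by $1$, so the two are equivalent. This yields the global bound $c_1 h^{d/2}\|\chi\|_h \leq \|\chi\|\leq c_2 h^{d/2}\|\chi\|_h$.

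The only nontrivial ingredient is the bookkeeping in the assembly step; the reference-element equivalence is pure linear algebra on a finite-dimensional space, and the scaling on each physical element is a direct computation. Quasi-uniformity of $\mathcal{T}_h$, which is built into the hypotheses of the paper, takes care of both the uniform $h^d$ scaling of $|\det J_K|$ and the uniform bound on node multiplicity.
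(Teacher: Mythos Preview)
Your proof is correct and follows exactly the approach the paper indicates: the paper does not give a detailed argument but simply notes that the equivalence holds on the reference element by finite-dimensionality and then follows from a ``straightforward homogeneity argument (or scaling argument),'' which is precisely what you have spelled out, including the elementwise transfer and the bounded node-multiplicity assembly step.
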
 
 
Recall that the FE solutions $u^h(x, t)$ are used as snapshots, $\mathcal{H}= L^2$ is considered in the POD method and $\varphi_j(x)$ is the $j$-th POD basis. 
Besides the POD projection error in $L^2(0, T, L^2(\Om))$ given in Proposition \ref{pr:poderr}, 
\begin{equation}
\int_0^T
  \Big\| u^h(\cdot, t) - 
  \sum_{j=1}^r \left( u^h(\cdot, t), \varphi_j(\cdot) \right) \, \varphi_j(\cdot) 
  \Big\|^2 \, \id t
  = \sum\limits_{j>r} \lambda_j,
 \label{eq:poderrL2} 
\end{equation}
we have the projection error in $L^2(0, T, H^1(\Om))$ norm as follows. 
\begin{lemma}[\cite{Singler2013}]
The POD projection error in $H^1$ norm satisfies
\begin{equation}
\int_0^T
  \Big\| u^h(\cdot, t) - 
  \sum_{j=1}^r \left( u^h(\cdot, t), \varphi_j(\cdot) \right) \, \varphi_j(\cdot) 
  \Big\|_{1}^2 \, \id t
  = \sum\limits_{j>r} \|\varphi_j\|_{1}^2\, \lambda_j .
 \label{eq:poderrH1} 
\end{equation}
\label{pr:poderrH1}
\end{lemma}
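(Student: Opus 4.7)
The plan is to reduce the $H^1$ statement to a diagonalization argument driven by the spectral identity $(u^h(\cdot,t),\varphi_j) = \sqrt{\lambda_j}\, v_j(t)$. I would verify this identity first: substituting the formula \eqref{pod_basis_formula} for $\varphi_j$ gives
\[
(u^h(\cdot,t),\varphi_j) = \frac{1}{\sqrt{\lambda_j}}\int_0^T v_j(s)\, (u^h(\cdot,t),u^h(\cdot,s))\, \id s = \frac{1}{\sqrt{\lambda_j}}(K v_j)(t) = \sqrt{\lambda_j}\, v_j(t),
\]
where I used the definition of $K$ from \eqref{pod_eigenvalue} together with the eigenvalue equation.

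With this identity in hand, I would expand $u^h(\cdot,t) = \sum_j \sqrt{\lambda_j}\, v_j(t)\, \varphi_j(\cdot)$ in $L^2$, so that the projection residual in \eqref{eq:poderrH1} is the tail
\[
u^h(\cdot,t) - \sum_{j=1}^r (u^h(\cdot,t),\varphi_j)\, \varphi_j(\cdot) = \sum_{j>r} \sqrt{\lambda_j}\, v_j(t)\, \varphi_j(\cdot).
\]
Taking the $H^1$ norm squared produces a double sum with coefficients $\sqrt{\lambda_j \lambda_k}\, v_j(t)v_k(t)\,(\varphi_j,\varphi_k)_1$, and integrating over $[0,T]$ collapses it to the diagonal because the eigenfunctions $\{v_j\}$ are orthonormal in $L^2(0,T)$ (a standard consequence of $K$ being compact and self-adjoint). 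What remains is exactly $\sum_{j>r} \lambda_j\, \|\varphi_j\|_1^2$, matching the claim.

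The one point I would take care with, and that I expect to be the main obstacle, is justifying that the $L^2$ expansion of $u^h$ can be inserted inside the $H^1$ norm and that the interchange of summation and time integration is legitimate. For the time-discrete snapshot ensemble of Remark 2.1 the sum is finite and nothing is needed, but in the continuous setting one must use that $u^h(\cdot,t)$ lies in the closed span of $\{\varphi_j\}$ (inherited from the POD construction) together with enough regularity — supplied here by hypothesis (H1) and the finite element estimate \eqref{eq:femerror} — to ensure the tail converges in $H^1$. Once this technicality is resolved, the computation itself is essentially a one-line corollary of the spectral identity and the orthonormality of the $v_j$.
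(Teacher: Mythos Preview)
Your argument is correct and is essentially the standard proof of this identity. Note, however, that the paper does not supply its own proof of this lemma at all: it is quoted verbatim from \cite{Singler2013} and stated without proof. So there is nothing in the paper to compare against beyond the citation; your derivation via the spectral identity $(u^h(\cdot,t),\varphi_j)=\sqrt{\lambda_j}\,v_j(t)$ and the $L^2(0,T)$-orthonormality of the eigenfunctions $v_j$ is exactly the expected route, and your caveat about justifying the $H^1$ convergence of the tail in the time-continuous case is the right technical point to flag.
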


For the POD approximation, we have the following POD inverse estimate \cite{KV01}:
\begin{lemma}
Let $M_{r} \in \R^{r \times r}$ with $[M_r]_{jk} = (\varphi_j , \varphi_k)$ be the POD mass matrix, 
$S_{r} \in \R^{r \times r}$ with $[S_r]_{jk} = [M_r]_{jk} + (\nabla \varphi_j , \nabla \varphi_k)$ be the POD stiffness matrix, 
and  $\| \cdot \|_2$  denote the matrix spectral norm.
Then, for all $v \in V_r$, the following estimates hold.
\begin{eqnarray}
\| v \| 
&\leq& \sqrt{\| M_r \|_2 \, \| S_r^{-1} \|_2 } \, \| v \|_{1} \, ,
\label{lemma_inverse_pod_1} \\
\| v \|_{1} 
&\leq& \sqrt{\| S_r \|_2 \, \| M_r^{-1} \|_2 } \, \| v \| \, .
\label{lemma_inverse_pod_2}
\end{eqnarray}
Since we choose $\mathcal{H}= L^2$ in the POD method, both $\| M_r \|_2$ and $\| M_r ^{-1}\|_2$ are one.
\label{lemma_inverse_pod}
\end{lemma}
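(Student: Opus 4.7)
The proof is essentially a translation between the intrinsic norms on $V_r$ and the Euclidean norm on coefficient vectors, via the Rayleigh quotient. I would carry it out in three short steps.

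First, I would expand an arbitrary $v \in V_r$ in the POD basis: write $v = \sum_{j=1}^r c_j \varphi_j$ and set $\mathbf{c} = (c_1, \dots, c_r)^{\intercal}$. A direct calculation using bilinearity of $(\cdot,\cdot)$ then gives the two identities
\begin{equation*}
\|v\|^2 = \mathbf{c}^{\intercal} M_r \mathbf{c}, \qquad \|v\|_1^2 = \|v\|^2 + \sum_{j,k} c_j c_k (\nabla \varphi_j, \nabla \varphi_k) = \mathbf{c}^{\intercal} S_r \mathbf{c},
\end{equation*}
by the definitions of $M_r$ and $S_r$. Note both matrices are symmetric positive definite: $M_r$ because the POD basis is linearly independent, and $S_r$ because the $H^1$-norm is a norm on $V_r$.

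Second, I would invoke the Rayleigh quotient bounds for symmetric positive definite matrices: for any $\mathbf{c} \in \R^r$,
\begin{equation*}
\lambda_{\min}(S_r)\,\|\mathbf{c}\|_2^2 \le \mathbf{c}^{\intercal} S_r \mathbf{c} \le \|S_r\|_2\,\|\mathbf{c}\|_2^2,
\end{equation*}
and similarly for $M_r$. Since $S_r$ is SPD, $\lambda_{\min}(S_r) = 1/\|S_r^{-1}\|_2$, and likewise $\lambda_{\min}(M_r) = 1/\|M_r^{-1}\|_2$. Combining the upper bound for $\mathbf{c}^{\intercal} M_r \mathbf{c}$ with the lower bound for $\mathbf{c}^{\intercal} S_r \mathbf{c}$ yields
\begin{equation*}
\|v\|^2 = \mathbf{c}^{\intercal} M_r \mathbf{c} \le \|M_r\|_2\,\|\mathbf{c}\|_2^2 \le \|M_r\|_2\,\|S_r^{-1}\|_2\,\mathbf{c}^{\intercal} S_r \mathbf{c} = \|M_r\|_2\,\|S_r^{-1}\|_2\,\|v\|_1^2,
\end{equation*}
which is \eqref{lemma_inverse_pod_1}. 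Swapping the roles of $M_r$ and $S_r$ gives \eqref{lemma_inverse_pod_2} in exactly the same way.

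Finally, I would remark on the closing statement of the lemma: when $\mathcal{H} = L^2$, the POD basis is $L^2$-orthonormal by construction, so $M_r = I_r$ and hence $\|M_r\|_2 = \|M_r^{-1}\|_2 = 1$. There is no real obstacle here — the result is a finite-dimensional linear-algebra identity dressed in functional-analytic clothing — and the only point to double-check is the positive definiteness of $S_r$, which is immediate since $\mathbf{c}^{\intercal} S_r \mathbf{c} = \|v\|_1^2 > 0$ whenever $\mathbf{c} \ne 0$.
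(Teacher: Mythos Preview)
Your proof is correct and is exactly the standard Rayleigh-quotient argument underlying this inverse estimate. The paper does not actually prove this lemma; it is quoted from \cite{KV01}, so there is no in-paper proof to compare against, but your argument is essentially the one given in that reference.
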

\subsection{Step 2: $L^2$ Projection Error}
Next, we define the $L^2$ projection of $u$, $w_r^h$, from $L^2$ to $V_r^h$ such that  
\begin{equation}
\left( u- w_r^h, \vrh\right) = 0,\qquad  \forall\, \vrh\in V_r^h. 
\label{eq:L2proj}
\end{equation}
We have the following estimation of the $L^2$ projection error. 
\begin{lemma}
The $L^2$ projection of $u$, $w_r^h$, satisfies the following error estimations:
\begin{eqnarray}
\int_0^T \left\| u-\wrh \right\|^2 \, \id t
&\leq& C\left(h^{2m+2}  + \sum_{j > r} \lambda_j\right), 
\label{eq:L2} \\
\int_0^T \left \| \nabla \left( u - \wrh \right) \right \|^2\, \id t
&\leq& 
C\left(h^{2m}+ \|S_r\|_2\, h^{2m+2}+ \sum_{j >r}\|\varphi_j\|_{1}^2\, \lambda_j \right)\,,
\label{eq:L2_grad}
\end{eqnarray}
where $C = C(u, T)$.
\label{lemma_ritz}
\end{lemma}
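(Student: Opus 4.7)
The plan is to introduce the POD $L^2$-projection of the snapshot, $P_r u^h := \sum_{j=1}^{r}(u^h,\varphi_j)\,\varphi_j$, as an intermediate quantity, and to decompose the error by the triangle inequality
\[
u - w_r^h \;=\; (u - u^h) \;+\; (u^h - P_r u^h) \;+\; (P_r u^h - w_r^h).
\]
The first term will be controlled by the FE error \eqref{eq:femerror}, the second by the POD projection errors \eqref{eq:poderrL2}--\eqref{eq:poderrH1}, and the third by the POD inverse estimate \eqref{lemma_inverse_pod_2}. The key observation that makes the third term manageable is that, since $\varphi_j\in V^h$ (so $V_r^h=V_r$), both $P_r u^h$ and $w_r^h$ lie in $V_r^h$, and the $L^2$-orthogonality that defines $w_r^h$ forces
\[
P_r u^h - w_r^h \;=\; \sum_{j=1}^{r} (u^h - u,\varphi_j)\,\varphi_j,
\]
i.e., the $L^2$-projection of $u^h-u$ onto $V_r^h$. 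Consequently $\|P_r u^h - w_r^h\| \le \|u^h - u\|$, so this difference inherits the FE convergence rate without picking up the POD truncation tail.

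For the $L^2$ estimate \eqref{eq:L2}, I would use that $w_r^h$ is the $L^2$-best approximation of $u$ in $V_r^h$ and that $P_r u^h\in V_r^h$, so
\[
\|u - w_r^h\| \;\le\; \|u - P_r u^h\| \;\le\; \|u - u^h\| \;+\; \|u^h - P_r u^h\|.
\]
Squaring, integrating over $[0,T]$, and invoking \eqref{eq:femerror} together with \eqref{eq:poderrL2} yields the claimed bound $C\bigl(h^{2m+2}+\sum_{j>r}\lambda_j\bigr)$.

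For the $H^1$ estimate \eqref{eq:L2_grad}, I would apply the triangle inequality in the gradient norm to the three pieces above. The first contributes $\int_0^T\|\nabla(u-u^h)\|^2\,\id t\le Ch^{2m}$ by \eqref{eq:femerror}. The second contributes $\sum_{j>r}\|\varphi_j\|_1^2\lambda_j$ directly from Lemma~\ref{pr:poderrH1}. For the third, since $P_r u^h-w_r^h\in V_r^h$, the inverse estimate \eqref{lemma_inverse_pod_2} together with the identification above gives
\[
\|\nabla(P_r u^h - w_r^h)\| \;\le\; \|P_r u^h - w_r^h\|_1 \;\le\; \sqrt{\|S_r\|_2}\,\|P_r u^h - w_r^h\| \;\le\; C\sqrt{\|S_r\|_2}\,h^{m+1},
\]
after using $\|M_r^{-1}\|_2=1$ and the FE error. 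Squaring and integrating produces the $\|S_r\|_2\,h^{2m+2}$ term, and summing the three contributions closes the estimate.

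The main obstacle is precisely this third term: a naive bound of $\|P_r u^h-w_r^h\|$ by the triangle inequality through $u$ would also pick up $\sum_{j>r}\lambda_j$, which after multiplication by $\|S_r\|_2$ would spoil the stated bound (replacing $\sum_{j>r}\|\varphi_j\|_1^2\lambda_j$ with the larger $\|S_r\|_2\sum_{j>r}\lambda_j$). Recognizing $P_r u^h-w_r^h$ as the $L^2$-projection of the FE error $u^h-u$ onto $V_r^h$, rather than going through $u$, is what cleanly separates the $h$-dependence from the POD tail and delivers exactly the three terms appearing in \eqref{eq:L2_grad}.
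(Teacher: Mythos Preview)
Your proposal is correct and follows essentially the same route as the paper: the $L^2$ bound via the best-approximation property with the choice $v_r^h=P_r u^h$, and the gradient bound via the three-term split $(u-u^h)+(u^h-P_r u^h)+(P_r u^h-w_r^h)$ together with \eqref{eq:femerror}, \eqref{eq:poderrH1}, and the POD inverse estimate \eqref{lemma_inverse_pod_2}. Your explicit identification of $P_r u^h-w_r^h$ as the $L^2$-projection of $u^h-u$ onto $V_r^h$ is exactly the justification behind the paper's inequality $\|w_r^h-P_r u^h\|\le\|u-u^h\|$, which the paper invokes without further comment.
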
   

\begin{proof}
\begin{eqnarray}
\left\| u-\wrh \right\|^2
&=& \left( u - \wrh, u-\wrh \right) \nonumber \\
&\stackrel{ \eqref{eq:L2proj} }{=}& \left( u - \wrh, u-\vrh \right), \qquad \forall \, \vrh \in V_r^h. 
\label{lemma_L2_1}
\end{eqnarray}

It indicates, by Cauchy-Schwartz inequality, for all $\vrh \in V_r^h$,  
\begin{equation}
\left\| u-\wrh \right\| \leq \|u-\vrh\|.
\label{lemma_L2_2}
\end{equation}
Decomposing $u-\vrh = u- u^h + (u^h-\vrh)$ and choosing $\vrh = \textrm{P}_r u^h = \sum\limits_{j=1}^r \left( u^h, \varphi_j \right)\varphi_j $ in \eqref{lemma_L2_2}, by the triangular inequality and Proposition \ref{pr:poderr}, we have
\begin{eqnarray}
\int_0^T \left\| u-\wrh \right\|^2\, \id t
&\leq& C\left( \int_0^T \left\| u- u^h \right\|^2\, \id t 
+ \int_0^T \Big\| u^h - \sum\limits_{j=1}^r \left( u^h, \varphi_j \right)\varphi_j \Big\|^2 \, \id t \right) \nonumber \\
&\stackrel{ \eqref{eq:poderrL2} }{\leq}& 
C\left(\int_0^T \left\| u-u^h \right\|^2 \, \id t  + \sum_{j > r} \lambda_j\right)\,.
\label{lemma_L2_3}
\end{eqnarray}
Considering the FE approximation error \eqref{eq:femerror}, we have the bound for the $L^2$ projection error in $L^2(0, T; L^2(\Om))$ as \eqref{eq:L2}. 

By the triangular inequality and adding and subtracting $\textrm{P}_r u^h= \sum\limits_{j=1}^r \left( u^h, \varphi_j \right)\varphi_j $, we have
\begin{eqnarray*}
\int_0^T \left \| \nabla \left( u - \wrh \right) \right \|^2\, \id t
&\leq C&
\int_0^T \left(
\left\| \nabla\left(u- u^h\right) \right\|^2 + \left\| \nabla\left(u^h-\textrm{P}_r u^h\right) \right\|^2 + \left\| \nabla\left(\textrm{P}_r u^h-\wrh\right) \right\|^2  \right)  \id t  \\
&\stackrel{\eqref{eq:poderrH1}, \eqref{lemma_inverse_pod_2}}{\leq} C&
\left( \int_0^T \left\| \nabla\left(u- u^h\right) \right\|^2 \id t + \sum\limits_{j>r} \|\varphi_j\|_{1}^2\,\lambda_j  
+ \left\| S_r \right\|_2\int_0^T \|\textrm{P}_r u^h - \wrh \|^2 \right) \\
&\leq C&
\left( \int_0^T \left\| \nabla\left(u- u^h\right) \right\|^2 \id t + \sum\limits_{j>r} \|\varphi_j\|_{1}^2\,\lambda_j  
+ \left\| S_r \right\|_2\int_0^T \|u-u^h\|^2 \right),
\label{eq:errlem6} 
\end{eqnarray*}
where we use $\|\wrh- \textrm{P}_r u^h\| \leq \|u-u^h\|$ in the last inequality. 
Considering the FE approximation error estimation \eqref{eq:femerror}, we have the bound for the $L^2$ projection error in $L^2(0, T; H^1(\Om))$ as \eqref{eq:L2_grad}. 
This proved the lemma. 
\end{proof}

\subsection{Step 3: Main Results}
Finally, we discuss the main theoretical results of this paper, which is about the approximation property of the POD-FEIC \eqref{eq:podgfeic}. 
We first estimate the difference between the $L^2$ projection of $u$, $\wrh$, and the approximation solution $\urh$ on a certain time interval in Lemma \ref{lem:prh_error}, which is bounded by the $L^2$ projection error. 
The conclusion is then extended to the whole time interval through a continuity argument in Lemma \ref{lem:prh_error_T}.  
By using the triangular inequality, we get the final estimation of approximation error $u-\urh$ in Theorem \ref{thm:err}. 
The crucial component  in the analysis is the interpolation error of the nonlinear term $\left( N(u) - \mathcal{I}^h N (\urh), \vrh \right)$, which can be decomposed in two different ways: 
\begin{equation}
\left(N(u) - \mathcal{I}^h N (\urh), \vrh \right) = 
\left(N(u)- \mathcal{I}^h N (u), \vrh \right) + \left(\mathcal{I}^h N (u) - \mathcal{I}^h N (\urh), \vrh \right),
\label{eq:nonlapproach1}
\end{equation}
and 
\begin{equation}
\left(N(u) - \mathcal{I}^h N (\urh), \vrh \right) = 
\left(N(u)- N (\urh), \vrh \right) + \left(N (\urh) - \mathcal{I}^h N (\urh), \vrh \right). 
\label{eq:nonlapproach2}
\end{equation}
The first approach has been used in \cite{tourigny1990product}, 
in which the first term on the RHS of \eqref{eq:nonlapproach1} is bounded by the standard FE interpolation error under the smoothness assumption of $N(u)$, and the second term can be estimated with the help of the local Lipschitz continuity assumption of $N(u)$ and an auxiliary 'euclidean' norm. 
The second approach has been used in \cite{chen1989error,larsson1989interpolation}, in which the first term on the RHS of \eqref{eq:nonlapproach2} also appears in the standard FE discretization, thus can be treated as usual. 
The second term can be estimated by the FE interpolation error of $N(\urh)$, whose accuracy relies on the regularity of $N(\urh)$.  It is determined by a hypothesis on $\urh$ and a stronger smoothness assumption of $N(u)$ than that required in the first approach.   
In this paper, we will follow the first approach \eqref{eq:nonlapproach1}.

let $v = \vrh$ in \eqref{eq:pdewf1} and subtract it from \eqref{eq:podgfeic}, we have the error equation of $e= \urh - u$ as follows. 
\begin{equation}
\left( e_t, \vrh \right) 
+ a\left(e, \vrh\right)
+ \left(\mathcal{I}^h N (\urh)-N(u), \vrh \right) = 0, \qquad \forall\, \vrh \in V_r^h.
\label{eq:err1}
\end{equation}
Let  $\prh = \urh - \wrh$ and $\eta = u - \wrh$, we have the decomposition of error, $e = \prh - \eta$. 
Based on the definition of $L^2$ projection \eqref{eq:L2proj}, we have $(\eta_t, \vrh) = 0$. 
Therefore, the error equation \eqref{eq:err1} can be rewritten as: 
\begin{equation}
\left( \prth, \vrh \right) + a\left(\prh, \vrh\right) =  a\left(\eta, \vrh\right) + \left( N(u) - \mathcal{I}^h N(\urh), \vrh \right) , \qquad \forall\, \vrh \in V_r^h.
\label{eq:err2}
\end{equation}
\begin{lemma}
Under assumptions (H1)-(H3), let $u$ be the solution of \eqref{eq:pdewf1} and $\urh$ be the solution of \eqref{eq:podgfeic} with the initial condition $\urh(\cdot, 0)= \sum_{j=1}^r (u_0, \varphi^h_{r, j})\varphi^h_{r, j}$ in \eqref{eq:podgfeic_t0}, respectively. 
Assume that, for some $t_1$ with $0< t_1 \leq T$, we have 
\begin{equation}
\|\prh(t)\|_{L^{\infty}(\Om)}<\frac{1}{2},\quad \text{for all } 0<t\leq t_1.
\label{eq:assume_prh}
\end{equation}
Then it follows that 
\begin{equation}
\|\prh(t_1)\|^2+\int_{0}^{t_1}\|\nabla \prh(t)\|^2  \id t \leq C(u, T) \left(h^{2m} + \|S_r\|_2\, h^{2m+2} + \sum\limits_{j>r} \|\varphi_j\|_{1}^2\,  \lambda_j\right).
\label{eq:prh_error_t1}
\end{equation}
\label{lem:prh_error}
\end{lemma}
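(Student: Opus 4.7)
My plan is a standard energy estimate on the error equation \eqref{eq:err2}, testing with $\vrh = \prh$, followed by Gronwall. First I would observe that the initial condition in the statement, $\urh(\cdot,0)=\sum_{j=1}^r(u_0,\varphi_{r,j}^h)\varphi_{r,j}^h$, coincides with $\wrh(\cdot,0)$, so $\prh(0)=0$. Setting $\vrh=\prh$ in \eqref{eq:err2} and using coercivity \eqref{a_coercivity}, the left-hand side becomes $\frac{1}{2}\frac{d}{dt}\|\prh\|^2+\beta\|\prh\|_1^2$. The linear right-hand side term $a(\eta,\prh)$ is controlled via continuity \eqref{a_continuity} and Young's inequality, absorbing a fraction of $\beta\|\prh\|_1^2$ into the left.

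The heart of the argument is bounding the nonlinear interpolation term $(N(u)-\mathcal{I}^hN(\urh),\prh)$. Following the decomposition \eqref{eq:nonlapproach1}, the piece $(N(u)-\mathcal{I}^hN(u),\prh)$ is controlled directly by Cauchy--Schwarz together with Lemma \ref{assumption_approximability} (applied with $\gamma=0$) and the smoothness hypothesis (H2), yielding an $O(h^{m+1})$ bound on $\|N(u)-\mathcal{I}^hN(u)\|$. For the remaining piece $(\mathcal{I}^hN(u)-\mathcal{I}^hN(\urh),\prh)$, I would invoke the euclidean-type norm $\|\cdot\|_h$ defined in \eqref{def:newnorm}: since $\mathcal{I}^hN(u)-\mathcal{I}^hN(\urh)\in V^h$, Lemma \ref{lem:newnorm} gives
\begin{equation*}
\|\mathcal{I}^hN(u)-\mathcal{I}^hN(\urh)\|\le c_2 h^{d/2}\,\|\mathcal{I}^hN(u)-\mathcal{I}^hN(\urh)\|_h.
\end{equation*}
Evaluating the $\|\cdot\|_h$ norm nodewise and applying the local Lipschitz hypothesis (H3) yields $\|\mathcal{I}^hN(u)-\mathcal{I}^hN(\urh)\|_h\le L\,\|\mathcal{I}^h u-\urh\|_h$; then the lower bound in \eqref{eq:newnorm} applied to $\mathcal{I}^h u-\urh\in V^h$ converts this back to a standard $L^2$ norm, and a triangle inequality $\|\mathcal{I}^h u-\urh\|\le\|\mathcal{I}^h u-u\|+\|\eta\|+\|\prh\|$ finishes the estimate.

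The main obstacle is justifying the pointwise use of (H3), which requires $u(x_i)$ and $\urh(x_i)$ to lie in $(-M,M)$. Here I would use the standing hypothesis \eqref{eq:assume_prh}: since $\urh=u+\prh-\eta$ with $\|u\|_{L^\infty}=M-1$ and $\|\prh\|_{L^\infty}<1/2$ on $[0,t_1]$, it suffices that $\|\eta\|_{L^\infty}<1/2$, which follows from standard $L^\infty$ bounds on the $L^2$ projection error for $h$ small enough. This is where the hypothesis on $\prh$ is actually used.

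Collecting the estimates and applying Young's inequality to absorb $\|\prh\|_1^2$ into the left-hand side, I arrive at a differential inequality of the form
\begin{equation*}
\tfrac{d}{dt}\|\prh\|^2+\tfrac{3\beta}{2}\|\prh\|_1^2\le C\|\prh\|^2+C\bigl(\|\eta\|_1^2+\|\eta\|^2+\|\mathcal{I}^h u-u\|^2+h^{2m+2}\bigr).
\end{equation*}
Integrating from $0$ to $t\le t_1$, using $\prh(0)=0$, invoking Gronwall, and then substituting Lemma \ref{lemma_ritz} for the $\eta$ integrals and Lemma \ref{assumption_approximability} for $\int_0^T\|\mathcal{I}^h u-u\|^2$ produces exactly the claimed bound \eqref{eq:prh_error_t1}, with constant depending on $u$ and $T$ but not on $h$ or $r$.
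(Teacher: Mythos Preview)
Your proposal is correct and follows essentially the same approach as the paper's proof: test the error equation \eqref{eq:err2} with $\prh$, decompose the nonlinear term via \eqref{eq:nonlapproach1}, control the interpolated Lipschitz piece through the euclidean norm of Lemma~\ref{lem:newnorm} together with (H3), and close with Gronwall using $\prh(0)=0$. The only cosmetic difference is that the paper inserts $\wrh$ as an intermediate point and splits $\mathcal{I}^hN(u)-\mathcal{I}^hN(\urh)$ into two pieces (one giving $\|\eta\|$, the other $\|\prh\|$) rather than handling it in a single step via your triangle inequality $\|\mathcal{I}^h u-\urh\|\le\|\mathcal{I}^h u-u\|+\|\eta\|+\|\prh\|$; the resulting bounds are identical.
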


\begin{proof}
First note that one can certainly find a $t_1$ to make \eqref{eq:assume_prh} true by adjusting the mesh size $h$ and the number of POD basis functions $r$. 
We choose $h$ to be small enough and $r$ to be large enough to ensure that $\|u-\wrh\|_{L^{\infty}(\Om)}<\frac{1}{2}$, then together with \eqref{eq:assume_prh}, we have both $\wrh$ and $\urh$ locate on the interval $(-M, M)$, where $M$ is defined in (H3). 
This allows us to take advantage of the local Lipschitz condition of $N(u)$ on the interval $t\in [0, t_1]$. Next we start estimations. 

Let $\vrh = \prh$ in \eqref{eq:err2}, we have 
\begin{eqnarray}
\left( \prth, \prh \right) &+& a\left(\prh, \prh\right) =  a\left(\eta, \prh\right) + \left( N(u) - \mathcal{I}^h N(u), \prh \right)   \nonumber \\
&+& \left(  \mathcal{I}^h N(u) - \mathcal{I}^h N(\wrh), \prh \right)  
+ \left(  \mathcal{I}^h N(\wrh) - \mathcal{I}^h N(\urh), \prh \right). 
\label{eq:err3}
\end{eqnarray}
By the continuity \eqref{a_continuity} and coercivity \eqref{a_coercivity} of $a(\cdot, \cdot)$, and Cauchy-Schwartz inequality, we have 
\begin{eqnarray}
\frac{1}{2} \frac{\id}{\id t}\|\prh\|^2 
&+& \beta \|\nabla \prh\|^2 
\leq
\alpha \|\nabla \eta\|\, \|\nabla \prh\|
+ \|N(u) - \mathcal{I}^h N(u)\|\, \|\prh\| \nonumber \\
&+& \|\mathcal{I}^h N(u) - \mathcal{I}^h N(\wrh)\|\, \| \prh\|
+ \|\mathcal{I}^h N(\wrh) - \mathcal{I}^h N(\urh)\|\, \|\prh\|.
\label{eq:err4}
\end{eqnarray}
For the first term on the RHS of \eqref{eq:err4}, by the Youngs' inequality, we have 
\begin{eqnarray}
\alpha \|\nabla \eta\|\, \|\nabla \prh\| 
\leq 
\frac{\alpha^2}{2 \beta}\|\nabla \eta\|^2 + \frac{\beta}{2}\|\nabla \prh\|^2. 
\label{eq:err41}
\end{eqnarray}
For the second term on the RHS of \eqref{eq:err4}, Young's inequality yields 
\begin{eqnarray}
\|N(u) - \mathcal{I}^h N(u)\|\, \| \prh\| \leq
\frac{\|N(u) - \mathcal{I}^h N(u)\|^2}{2} + \frac{\|\prh\|^2}{2}.
\label{eq:err420}
\end{eqnarray}
Considering the FE interpolation error and the assumption (H2), we have 
\begin{eqnarray}
\|N(u) - \mathcal{I}^h N(u)\| \leq C h^{m+1} |N(u)|_{H^{m+1}(\Om)}.
\label{eq:err42}
\end{eqnarray}
For the third term on the RHS of \eqref{eq:err4}, Young's inequality gives 
\begin{eqnarray}
\|\mathcal{I}^h N(u) - \mathcal{I}^h N(\wrh)\|\, \| \prh\|
\leq
\frac{\|\mathcal{I}^h N(u) - \mathcal{I}^h N(\wrh)\|^2}{2} + \frac{\|\prh\|^2}{2}. 
\label{eq:err430}
\end{eqnarray}
Employing the Lemma \ref{lem:newnorm}, the local Lipschitz condition, and the triangular inequality, we have 
\begin{eqnarray}
\|\mathcal{I}^h N(u) - \mathcal{I}^h N(\wrh)\|  
&\stackrel{\eqref{eq:newnorm}}{\leq}& c_2 h^{\frac{d}{2}}\|\mathcal{I}^h N(u) - \mathcal{I}^h N(\wrh)\|_h
\stackrel{\eqref{feic_inter},\eqref{def:newnorm}}{=} c_2 h^{\frac{d}{2}}\|N(u) - N(\wrh)\|_h
\nonumber \\
&\stackrel{\eqref{eq:lipschitz}}{\leq}& L c_2 h^{\frac{d}{2}}\|u - \wrh \|_h
\stackrel{\eqref{feic_inter}, \eqref{def:newnorm}}{=} L c_2 h^{\frac{d}{2}}\|\mathcal{I}^h u - \wrh \|_h \nonumber \\
&\stackrel{\eqref{eq:newnorm}}{\leq}& L c_2 c_1^{-1}\|\mathcal{I}^h u - \wrh \| \nonumber \\
&\leq& L c_2 c_1^{-1}\left( \|u - \mathcal{I}^h u\| + \| u - \wrh \| \right). 
\label{eq:err43_0}
\end{eqnarray}
By the regularity assumption (H1) of solution $u$ and the FE approximability (Lemma \ref{assumption_approximability}), we have
\begin{eqnarray}
 \|u - \mathcal{I}^h u\| \leq C h^{m+1} |u|_{H^{m+1}(\Om)}.
 \label{eq:err43_1}
\end{eqnarray}
Thus, we have, for the third term on the RHS of \eqref{eq:err4}, 
\begin{eqnarray}
\|\mathcal{I}^h N(u) - \mathcal{I}^h N(\wrh)\|  
\leq
C\left(h^{m+1} + \|\eta\|\right).
\label{eq:err43}
\end{eqnarray}
For the forth term on the RHS of \eqref{eq:err4}, we use similar arguments to those for the third term and get 
\begin{eqnarray}
\|\mathcal{I}^h N(\wrh) - \mathcal{I}^h N(\urh)\|  
&\stackrel{\eqref{eq:newnorm}}{\leq}& c_2 h^{\frac{d}{2}}\|\mathcal{I}^h N(\wrh) - \mathcal{I}^h N(\urh)\|_h 
\stackrel{\eqref{feic_inter}, \eqref{def:newnorm}}{=}  c_2 h^{\frac{d}{2}}\|N(\wrh) - N(\urh)\|_h \nonumber \\
&\stackrel{\eqref{eq:lipschitz}}{\leq}& L c_2 h^{\frac{d}{2}}\|\wrh - \urh\|_h
\stackrel{\eqref{eq:newnorm}}{\leq} L c_2 c_1^{-1}\|\wrh - \urh\| \nonumber \\
&=& L c_2 c_1^{-1}\|\prh\|.
\label{eq:err44}
\end{eqnarray}
Substituting \eqref{eq:err41}, \eqref{eq:err420}, \eqref{eq:err42},  \eqref{eq:err430}, \eqref{eq:err43}, and \eqref{eq:err44} in \eqref{eq:err4}, we obtain 
\begin{eqnarray}
\frac{\id}{\id t}\|\prh\|^2 
+ \beta \|\nabla \prh\|^2 
&\leq& 
\frac{\alpha^2}{\beta} \|\nabla \eta\|^2
+ C\left(h^{2m+2}+\|\eta\|^2\right) + C^* \|\prh\|^2, 
\label{eq:err5}
\end{eqnarray}
where $C_*=L c_2 c_1^{-1}+1$. 
By Gronwall's lemma, on the interval $[0, t_1]$, we have 
\begin{eqnarray}
\|\prh(t_1)\|^2 
&+& \int_0^{t_1} \beta \|\nabla \prh\|^2 \id s  
\leq
\|\prh(0)\|^2  \nonumber \\
&&\hspace{1cm} + e^{C_* t_1}\int_0^{t_1} \left(\frac{\alpha^2}{\beta} \|\nabla \eta\|^2
+ Ch^{2m+2}+ C\|\eta\|^2 \right) \id s.
\label{eq:err51}
\end{eqnarray} 
Considering $t_1\leq T$ and the choice of the initial condition which indicates $\|\prh(0)\|= 0$, the above inequality yields 
\begin{eqnarray}
\|\prh(t_1)\|^2 
+ \int_0^{t_1}\beta \|\nabla \prh\|^2 \id s  
&\leq&
e^{C_* T} \int_0^{T} \left(\frac{\alpha^2}{\beta} \|\nabla \eta\|^2
+ C h^{2m+2} + C\|\eta\|^2 \right) \id s, \nonumber \\
&\stackrel{\eqref{eq:L2_grad}}{\leq}&
C\left(h^{2m}+ \|S_r\|_2\, h^{2m+2}+ \sum_{j >r} \|\varphi_j\|_{1}^2\,  \lambda_j \right)\,.
\label{eq:err6}
\end{eqnarray} 
where $C= C(u, T)$ independent of $t_1$. 
\end{proof}
%
\begin{lemma}
Suppose the order of finite elements $m\geq 1$ for $d=1$ and $m\geq 2$ for $d\geq 2$, respectively. With the same conditions as those in Lemma \ref{lem:prh_error}, we have 
\begin{equation}
\|\prh(T)\|^2+\int_{0}^{T}\|\nabla \prh(t)\|^2  \id t \leq C(u, T) \left(h^{2m} + \|S_r\|_2\, h^{2m+2} + \sum_{j >r} \|\varphi_j\|_{1}^2\,  \lambda_j \right)\, .
\label{eq:prh_error_T}
\end{equation}
\label{lem:prh_error_T}
\end{lemma}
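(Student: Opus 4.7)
The plan is a bootstrap (continuity) argument that upgrades Lemma \ref{lem:prh_error} from the a priori interval $[0,t_1]$ to the full interval $[0,T]$. I would define
\[
t^{*} := \sup\Bigl\{\,\tau \in [0,T]\ :\ \|\prh(s)\|_{L^{\infty}(\Om)} < \tfrac{1}{2}\ \text{for all } s\in[0,\tau]\Bigr\},
\]
and observe that $\prh(0)=0$ together with the continuity of $\prh$ in $t$ gives $t^{*}>0$. The goal is to show $t^{*}=T$, after which the conclusion \eqref{eq:prh_error_T} is exactly \eqref{eq:prh_error_t1} applied with $t_1=T$.

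For the contradiction step, I would suppose $t^{*}<T$. By continuity, $\|\prh(t^{*})\|_{L^{\infty}(\Om)}=\tfrac{1}{2}$. On $[0,t^{*}]$ the hypothesis \eqref{eq:assume_prh} of Lemma \ref{lem:prh_error} is satisfied, so
\[
\|\prh(t^{*})\|^{2} \leq C\bigl(h^{2m} + \|S_r\|_{2}\,h^{2m+2} + \sum_{j>r}\|\varphi_j\|_{1}^{2}\,\lambda_j\bigr).
\]
Next I would invoke the standard FE inverse inequality $\|\chi\|_{L^{\infty}(\Om)} \leq C\,h^{-d/2}\|\chi\|$ for $\chi\in V^{h}$, which transfers to $\prh\in V_r^{h}\subset V^{h}$. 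This converts the $L^{2}$ estimate above into an $L^{\infty}$ estimate whose leading behaviour is $h^{m-d/2}$, together with two POD-dependent remainders of size $\|S_r\|_{2}^{1/2}\,h^{m+1-d/2}$ and $h^{-d/2}\bigl(\sum_{j>r}\|\varphi_j\|_{1}^{2}\lambda_j\bigr)^{1/2}$. Under the imposed restrictions ($m\geq 1$ for $d=1$, $m\geq 2$ for $d\geq 2$), the exponent $m-d/2$ is at least $1/2$, so the dominant factor vanishes as $h\to 0$. For sufficiently fine mesh and sufficiently rich POD basis this forces $\|\prh(t^{*})\|_{L^{\infty}(\Om)} < \tfrac{1}{2}$, contradicting $\|\prh(t^{*})\|_{L^{\infty}(\Om)}=\tfrac{1}{2}$. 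Hence $t^{*}=T$, and \eqref{eq:prh_error_T} follows by applying Lemma \ref{lem:prh_error} with $t_1=T$.

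The main obstacle I anticipate is ensuring the $L^{\infty}$ bound is genuinely $o(1)$ in the joint refinement regime. The $h^{m-d/2}$ piece is clean and is precisely what forces the imposed lower bounds on $m$; the subtler part is absorbing the two POD-dependent remainders into the smallness condition, which amounts to a compatibility condition between the mesh parameter $h$ and the POD truncation level $r$ (consistent with the POD inverse estimates of Lemma \ref{lemma_inverse_pod} and the decay of $\sum_{j>r}\lambda_j$). Once such a compatibility condition on $(h,r)$ is assumed, the bootstrap closes and Lemma \ref{lem:prh_error_T} is immediate.
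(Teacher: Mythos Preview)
Your proposal is correct and follows essentially the same continuity (bootstrap) argument as the paper: define the maximal time $t^{*}$ for which the $L^{\infty}$ bound \eqref{eq:assume_prh} holds, suppose $t^{*}<T$ so that $\|\prh(t^{*})\|_{L^{\infty}(\Om)}=\tfrac{1}{2}$, then combine the $L^{2}$ estimate from Lemma~\ref{lem:prh_error} with the inverse inequality $\|\cdot\|_{L^{\infty}}\leq Ch^{-d/2}\|\cdot\|$ to force $\|\prh(t^{*})\|_{L^{\infty}(\Om)}<\tfrac{1}{2}$ for $h$ small and $r$ large, yielding the contradiction. Your discussion of the compatibility condition between $h$ and $r$ needed to absorb the POD-dependent remainders is in fact slightly more explicit than the paper's own treatment.
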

\begin{proof}
Assume $t_1^*$ is the largest value that makes \eqref{eq:assume_prh} true. 
If $t_1^* \neq T$, it must be 
\begin{equation}
\|\prh(t_1^*)\|_{L^{\infty}(\Om)} = \frac{1}{2}. 
\label{eq:prh_infty}
\end{equation}
However, by the inverse inequality, 
\begin{equation}
\|\prh(t_1^*)\|_{L^{\infty}(\Om)}\leq C h^{-\frac{d}{2}}\|\prh(t_1^*)\| 
\leq
C h^{-\frac{d}{2}} \left(h^m +\sqrt{\|S_r\|_2}\,  h^{m+1} +\sqrt{\sum_{j >r} \|\varphi_j\|_{1}^2\,  \lambda_j}\right). 
\end{equation}
Then, for $m\geq 1$ if $d=1$, and for $m\geq 2$ if $d= 2, 3$, one can always find a $h$ small enough and a $r$ large enough such that $\|\prh(t_1^*)\|_{L^{\infty}(\Om)} < \frac{1}{2}$. 
This contradicts with the assumption \eqref{eq:prh_infty}. Therefore, $t_1^* = T$, that is, the conclusion \eqref{eq:prh_error_t1} is true on the whole time interval $[0, T]$.
\end{proof}
\begin{theorem}
Under assumptions (H1)-(H3), let $u$ be the solution of \eqref{eq:pdewf1} and $\urh$ be the solution of \eqref{eq:podgfeic} with the initial condition $\urh(\cdot, 0)= \sum_{j=1}^r (u_0, \varphi^h_{r, j})\varphi^h_{r, j}$ in \eqref{eq:podgfeic_t0}, respectively.  
The order of finite elements $m\geq 1$ for $d=1$, $m\geq 2$ for $d\geq 2$. 
There exist positive numbers $h_0$ and $r_0$ such that, for $h\leq h_0$ and $r\geq r_0$, we have 
\begin{equation}
\|u^h_{r}(T)-u(T)\|^2 + \int_{0}^T \|\nabla \urh(t) - \nabla u(t) \|^2 \id t \leq C\left(h^{2m} + \|S_r\|_2\,  h^{2m+2} + \sum_{j >r} \|\varphi_j\|_{1}^2\,  \lambda_j\right), 
\label{eq:err}
\end{equation}
where $C= C(u, T) $ is a positive constant independent of $h$ and $\lambda_j$. 
\label{thm:err}
\end{theorem}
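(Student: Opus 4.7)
The plan is to combine Lemma \ref{lemma_ritz} and Lemma \ref{lem:prh_error_T} by a triangle-inequality argument using the decomposition introduced before Lemma \ref{lem:prh_error}, namely $u - \urh = \eta - \prh$ with $\eta = u - \wrh$ and $\prh = \urh - \wrh$. Squaring and using $(a+b)^2 \leq 2a^2+2b^2$ gives
\begin{equation*}
\|\urh(T) - u(T)\|^2 \leq 2\|\prh(T)\|^2 + 2\|\eta(T)\|^2,
\end{equation*}
and, after integration in time,
\begin{equation*}
\int_0^T \|\nabla(\urh-u)\|^2\,\id t \leq 2\int_0^T\|\nabla \prh\|^2\,\id t + 2\int_0^T\|\nabla \eta\|^2\,\id t.
\end{equation*}

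First, I would invoke Lemma \ref{lem:prh_error_T} (valid because the hypotheses on $m$, $d$, and the initial condition carry over) to control $\|\prh(T)\|^2 + \int_0^T\|\nabla \prh\|^2\,\id t$ directly by the right-hand side of \eqref{eq:err}. Existence of $h_0$ and $r_0$ is inherited from the continuation argument in the proof of Lemma \ref{lem:prh_error_T}, which requires $h$ small and $r$ large so that the inverse inequality keeps $\|\prh\|_{L^\infty}<\tfrac12$ on all of $[0,T]$.

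Second, the integrated gradient piece $\int_0^T\|\nabla \eta\|^2\,\id t$ is bounded immediately by \eqref{eq:L2_grad} in Lemma \ref{lemma_ritz}, which matches the target right-hand side.

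The main obstacle is the pointwise quantity $\|\eta(T)\|^2$, since Lemma \ref{lemma_ritz} only supplies an $L^2(0,T;L^2)$ bound on $\eta$. To handle it, I would exploit the best-approximation property of the $L^2$ projection: for any $v_r^h \in V_r^h$,
\begin{equation*}
\|\eta(T)\| = \|u(T) - \wrh(T)\| \leq \|u(T) - v_r^h\|.
\end{equation*}
Choosing $v_r^h = \sum_{j=1}^r (u^h(T),\varphi_j^h)\varphi_j^h$ and splitting via the triangle inequality gives $\|u(T)-v_r^h\| \leq \|u(T)-u^h(T)\| + \|u^h(T) - \sum_{j=1}^r (u^h(T),\varphi_j^h)\varphi_j^h\|$. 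The first term is bounded by $Ch^{m+1}$ using the pointwise FE estimate \eqref{eq:femerror}, and the second is the POD projection error of the snapshot at $t=T$, which fits into the $\sum_{j>r}\lambda_j$ tail (possibly after absorbing a $\|\varphi_j\|_1^2$ factor for consistency with \eqref{eq:err}). Since $h^{2m+2} \leq h^{2m}$ and $\lambda_j \leq \|\varphi_j\|_1^2\lambda_j$, all contributions are dominated by the right-hand side of \eqref{eq:err}. Collecting the three estimates and redefining the constant $C=C(u,T)$ yields the theorem.
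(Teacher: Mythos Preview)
Your argument is precisely the paper's: the proof there is a one-sentence appeal to the triangle inequality together with Lemma~\ref{lemma_ritz} and Lemma~\ref{lem:prh_error_T}, and your decomposition $u-\urh=\eta-\prh$ with the squared triangle inequality is exactly how that sentence unpacks. Your extra discussion of the pointwise term $\|\eta(T)\|^2$ goes beyond what the paper makes explicit; note, though, that the single-time POD projection error $\|u^h(T)-\sum_{j\le r}(u^h(T),\varphi_j^h)\varphi_j^h\|$ is not literally controlled by the integrated identity \eqref{eq:poderrL2}, so this step---while entirely in the spirit of the paper---leans on a pointwise-in-time bound that neither you nor the paper fully justifies.
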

\begin{proof}
The conclusion follows the triangular inequality, Lemma \ref{lem:prh_error_T}, and Lemma \ref{lemma_ritz}. 
\end{proof}
\begin{remark}
In Theorem \ref{thm:err}, the estimation for the gradient of errors, $\|\nabla \urh - \nabla u\|_{L^2(0, T; L^2(\Om))}$ is optimal with respect to (w.r.t.) both the spatial discretization ($\sim \mathcal{O}(h^m)$) and the POD truncation ($\sim \mathcal{O}(\sqrt{\sum_{j >r} \|\varphi_j\|_{1}^2\,  \lambda_j})$). 
By H\"older's inequality, it is easy to show the error in $L^2(0, T; H^1(\Om))$ has the same optimality. 
Although not proven here, the error in $L^2(0, T; L^2(\Om))$ is also optimal w.r.t. the spatial discretization ($\sim \mathcal{O}(h^{m+1})$) and the POD truncation ($\sim \mathcal{O}(\sqrt{\sum_{j >r} \lambda_j})$).  
We shall verify it numerically in the next section.  
\end{remark}
\begin{remark}
\label{rem:pdewf2}
Theorem \ref{thm:err} also holds on the FEIC approximation of the POD-G-ROM for the problem \eqref{eq:pdewf2}. The proof follows a similar argument. 
\end{remark}

\section{Numerical Tests}\label{sec:numeric}
The goal of this section is twofold: 
first, the proposed method will be validated by two problems appeared in interdisciplinary research. 
Accuracy and efficiency of the new approach are to be tested; 
second, the theoretical result in Section \ref{sec:analysis} will be numerically verified by another example. 

We mainly compare the results of the POD-FEIC \eqref{eq:podgfeic} with those of the POD-FEM \eqref{eq:podfem}. 
The errors in two different norms are measured: 
$$
\mathcal{E}_0(u, v) = \sqrt{\frac{1}{M}\sum\limits_{\ell=1}^M \|u(\cdot, t_{\ell}) - v(\cdot, t_{\ell}) \|^2}\, ,\quad
\mathcal{E}_1(u, v) = \sqrt{\frac{1}{M}\sum\limits_{\ell=1}^M \|u(\cdot, t_{\ell}) - v(\cdot, t_{\ell}) \|_1^2}\, ,
$$
where $\mathcal{E}_0$ is a discrete approximation of the error in $L^2(0, T; L^2(\Om))$, while $\mathcal{E}_1$ is a discrete approximation of the error in $L^2(0, T; H^1(\Om))$. 
For a fair comparison, all numerical tests reported in this paper are implemented on a PC with a 2.6GHz Intel Core i7 processor. 
As a criterion for efficiency, the CPU time of simulations is considered, which is the (on-line) time elapsed for the integration only, excluding the (off-line) time for generating basis functions, precomputing matrices, and calculating errors, etc.  

\subsection{Validation}\label{sec:validation}
For the validation purpose, we consider two examples: a one-dimensional (1D) FitzHugh-Nagumo (F-N) system, which possesses a cubic polynomial nonlinearity; and a two-dimensional (2D) Buckley-Leverett equation (BLE), which has a non-polynomial nonlinearity. 
\paragraph{FitzHugh-Nagumo System}
We first consider the simplified Hodgkin-Huxley model used in \cite{chaturantabut2010nonlinear}, which is a  F-N system. The model is a nonlinear PDE system and describes the activation and deactivation dynamics of a spiking neuron. The system reads: 
\begin{eqnarray}
\frac{\p v}{\p t} - \mu \Delta v -\frac{1}{\mu} v(v-0.1)(1-v) + \frac{1}{\mu} w = \frac{c}{\mu} , &\quad x\in [0, L], t\in [0, T], \\
\frac{\p w}{\p t} -b v +\gamma w = c, &\quad x\in [0, L], t\in [0, T], \nonumber \\
v(x, 0) = 0,\quad  w(x, 0) = 0, &\quad x\in [0, L], \nonumber \\
v_x(0, t) = -i_0(t),\quad v_x(L, t) = 0, &\quad t\in [0, T], \nonumber
\end{eqnarray}
where $v(x, t)$ and $w(x,t )$ are voltage and recovery voltage, respectively. 
Let $\bu = [v, w]^\intercal$, the weak form of the original system has the form of \eqref{eq:pdewf1}: 
\begin{equation}
\left( \frac{\p \bu}{\p t}, \bv \right) + {\bf M_1}\, a\left(\bu, \bv\right) + {\bf M_2}\, \left( \bu, \bv \right) + ({\bf N}, \bv) = ( \bf{f}, v ),
\end{equation}
where $a(\bu, \bv) = \mu(\nabla \bu, \nabla \bv)$, 
${\bf N}= \left[\frac{1}{\mu} v(v-0.1)(1-v), 0\right]^\intercal$, 
${\bf f}= \left[\frac{c}{\mu}, c \right]^\intercal$,
${\bf M_1} = \left[\begin{array}{cc} 
\mu& 0 \\
0    & 0
\end{array}\right]$, 
and 
${\bf M_2} = \left[\begin{array}{cc} 
0    & \frac{1}{\mu} \\
-b    & \gamma
\end{array}\right]$.
We choose the same parameters as those utilized in \cite{chaturantabut2010nonlinear}, that is, 
$L= 1$, $T= 8$, $\mu= 0.015$, $b= 0.5$, $\gamma= 2$, $c= 0.05$ and the stimulus $i_0(t) = 50000t^3 e^{-15t}$. 

To generate snapshots, we use linear finite elements on a uniform mesh for spatial discretization with mesh size $h=1/512$, and the Crank-Nicolson scheme for time integration with time step $\Delta t = 1\times 10^{-2}$. 
Totally, 801 snapshots are collected and used to compute POD basis in $L^2$ space. 
Since the exact solution is unknown, we regard the FE solution as the benchmark. 

The errors and simulation time of the POD-FEM and the POD-FEIC are listed in Table \ref{table:fn_r} when the same number of POD basis functions, $r$, is used in both ROMs. 
It is seen that the FEIC discretization achieves same accuracy as that of the FE discretization, however, reduces the CPU time by a factor of $150$. 
A comparison of the limit cycle projected onto the $v-w$ plane among the FE solution, the POD-FEM ($r = 5$) solution, and the POD-FEIC ($r=5$) solution is shown in Figure \ref{Fig:fn_r}, which illustrates the correct limit cycle of the original system has been captured by the POD-FEIC when only 5 POD basis functions are used. 

\begin{table}[htb]
\centering
\caption{
Errors and the CPU time of the POD-FEM \eqref{eq:podfem} and the POD-FEIC \eqref{eq:podgfeic}. 
Note that the POD-FEIC keeps the same accuracy as the POD-FEM, but saves CPU time by over 150 times. 
}\label{table:fn_r}
\centering
\begin{tabular}{c|ccc|ccc}
\hline
\multirow{2}{*}{$r$}& \multicolumn{3}{c}{POD-FEM}&\multicolumn{3}{|c}{POD-FEIC}\\
\cline{2-7}
{}&$\mathcal{E}_0(\urh, u^h) $ &$\mathcal{E}_1(\urh, u^h) $ & CPU time & $\mathcal{E}_0(\urh, u^h) $ & $\mathcal{E}_1(\urh, u^h) $ & CPU time \\
\hline
 3 & 3.64e-03& 9.96e-02& 2.09e+02 & 3.64e-03& 9.96e-02 &1.20 \\
 5 & 6.61e-04& 2.27e-02& 2.11e+02 & 6.60e-04& 2.27e-02 &1.23  \\
 7 & 1.20e-04& 4.91e-03& 2.12e+02 & 1.20e-04& 4.91e-03 &1.25  \\
 9 & 2.03e-05& 1.25e-03& 2.12e+02 & 2.10e-05& 1.25e-03 &1.27  \\
\hline
\end{tabular}
\end{table} 
\begin{figure}[htb]
\centering
\includegraphics[width=0.5\textwidth]{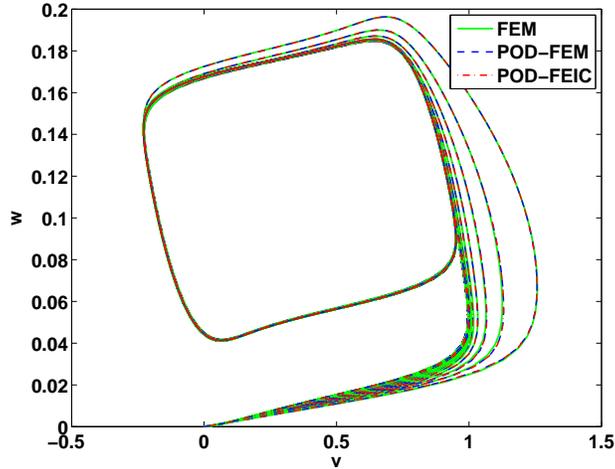}
\caption{
The limit cycle on the $v-w$ plane: the POD approximation ($r=5$) with either the FEM discretization (POD-FEM) or the FEIC discretization (POD-FEIC) coincides with that of the finite element solution (FEM).  
}\label{Fig:fn_r}
\end{figure} 

\paragraph{Buckley-Leverett Equation}  
We then consider the 2D BLE, which is usually used to describe two phase flow in porous media with a gravitation pull in $x$-direction \cite{nguyen2008efficient}. 
\begin{eqnarray}
\frac{\p u}{\p t} - \mu \Delta u + \frac{\p f_1(u)}{\p x} + \frac{\p f_2(u)}{\p y} = 0  , &\quad x\in \Om, t\in [0, T], \label{eq:ble} \\
u(x, 0) = e^{-16(x^2+y^2)}, &\quad x\in [0, L], \nonumber \\
u(x, t) =  0, &\quad  x\in \p\Om, t\in [0, T], \nonumber
\end{eqnarray}
where $f_1(u) = \frac{u^2}{u^2+(1-u)^2}$ and $f_2(u) = f_1(u) [1-5(1-u)^2]$. 
The weak form of the original system has the form of \eqref{eq:pdewf2} with $a(u, v) = \mu(\nabla u, \nabla v)$ and $N(u) = [-f_1, -f_2]^\intercal$.

In this test, we choose the same parameters as used in \cite{nguyen2008efficient}: $\mu = 0.1$, $T= 0.5$, $\Om = [-1.5, 1.5]\times [-1.5, 1.5]$.  
To generate snapshots, we use linear finite elements on a uniform triangular mesh for spatial discretization with mesh size $h=1/64$, and the Crank-Nicolson scheme for time integration with time step $\Delta t = 1\times 10^{-2}$. 
51 snapshots are collected and used to compute POD basis in $L^2$ space. 
Due to the lack of exact solution, we also consider the FE solution to be the benchmark.

The POD-FEM and the POD-FEIC approximation erros are listed in Table \ref{table:bl_r} when $r$ POD basis functions used in both ROMs.  
It is seen that the POD-FEIC obtains the same accuracy as that of the POD-FEM, but, decreases the CPU time by 40 times. 
A comparison among the FE solution, the POD-FEM, and the POD-FEIC result at $t=0.2$ is shown in Figure \ref{Fig:bl_r}. 

\begin{table}[htb]
\centering
\caption{
Errors and the CPU time of the POD-FEM \eqref{eq:podfem} and the POD-FEIC \eqref{eq:podgfeic}. 
Note that the POD-FEIC keeps the same accuracy as the POD-FEM, but saves CPU time by over 40 times. 
}\label{table:bl_r}
\centering
\begin{tabular}{c|ccc|ccc}
\hline
\multirow{2}{*}{$r$}& \multicolumn{3}{c}{POD-FEM}&\multicolumn{3}{|c}{POD-FEIC}\\
\cline{2-7}
{}&$\mathcal{E}_0(\urh, u^h)$ &$\mathcal{E}_1(\urh, u^h)$ & CPU time & $\mathcal{E}_0(\urh, u^h)$ & $\mathcal{E}_1(\urh, u^h)$ & CPU time \\
\hline
 5  & 4.44e-03& 7.63e-02& 4.86e+03 & 4.43e-03& 7.62e-02 & 116 \\
10 & 3.58e-04& 1.03e-02& 4.92e+03 & 3.70e-04& 1.04e-02 & 118  \\
15 & 3.34e-05& 1.34e-03& 4.95e+03 & 1.04e-04& 2.29e-03 & 119  \\
\hline
\end{tabular}
\end{table} 

\begin{figure}[htb]
\centering
\begin{minipage}[ht]{0.31\linewidth}
\includegraphics[width=1\textwidth]{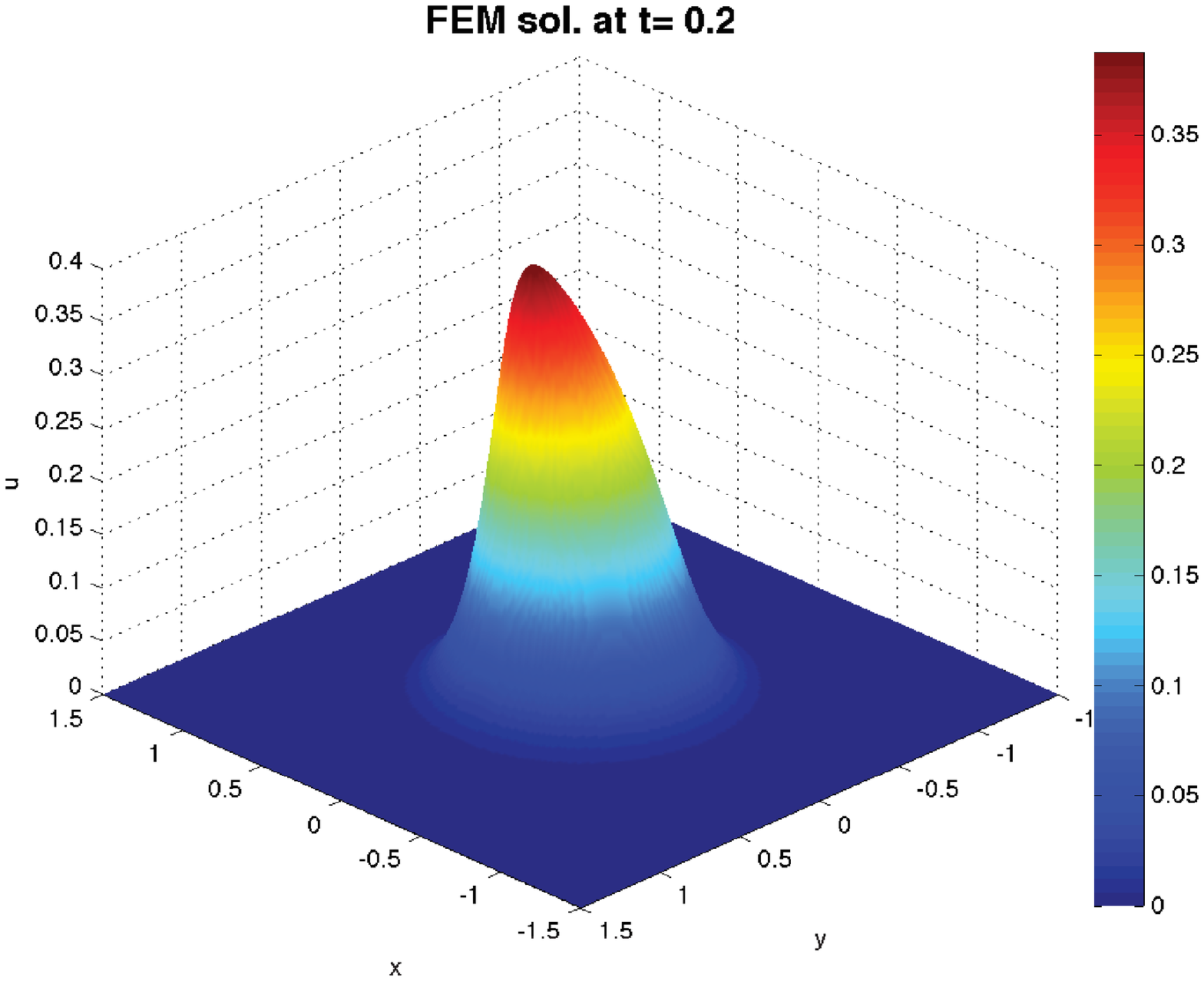}
\end{minipage}
\begin{minipage}[ht]{0.31\linewidth}
\includegraphics[width=1\textwidth]{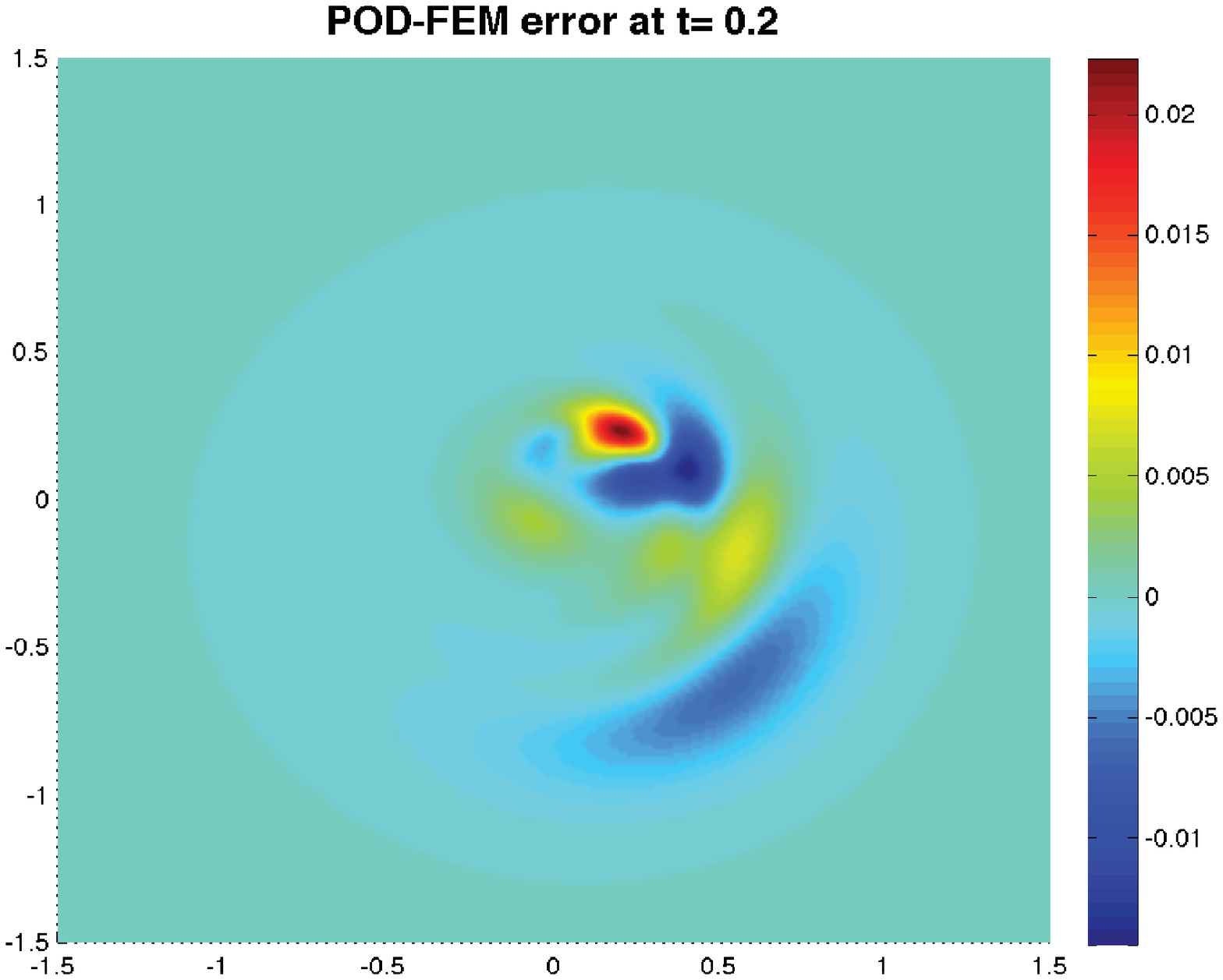}
\end{minipage}
\begin{minipage}[ht]{0.31\linewidth}
\includegraphics[width=1\textwidth]{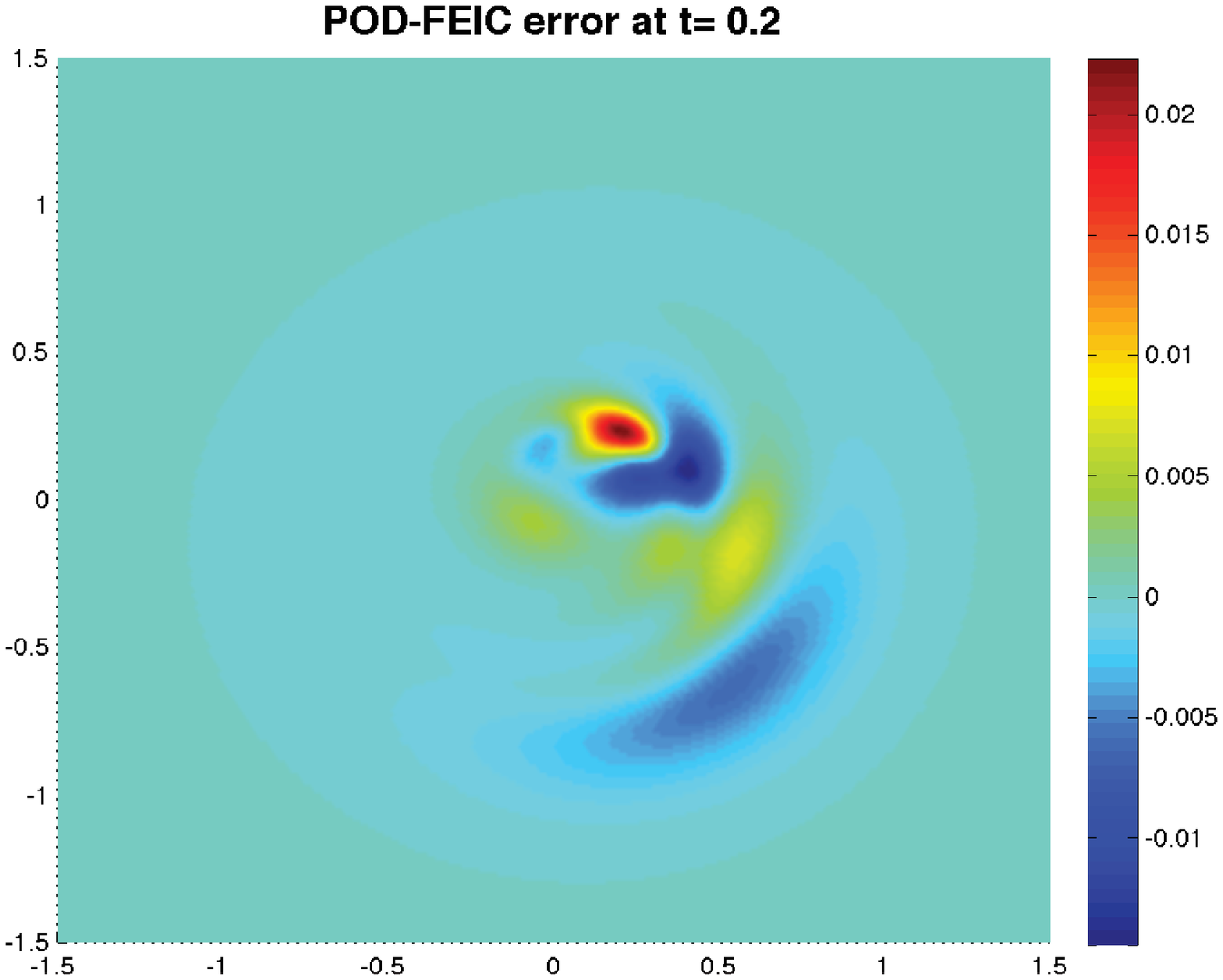}
\end{minipage}
\caption{
The simulations at $t=0.2$: the FEM solution (left), the error of POD-FEM with $r =5$ (middle), and the error of POD-FEIC with $r=5$ (right).  
}\label{Fig:bl_r}
\end{figure} 

\subsection{Verification}
In this subsection, we will verify the theoretical results obtained in Section \ref{sec:analysis} through a problem with a non-polynomial nonlinearity. 
It is governed by the following equations: 
\begin{eqnarray}
\frac{\p u}{\p t} - \Delta u + \sin(u) = f , &\quad x\in\Om, t\in [0, T], \label{eq:sin_1} \\
u = 0, &\quad x\in \partial \Om, t\in [0, T], \nonumber \\
u(x, 0) = g, &\quad x\in \Om, \nonumber
\label{eq:sin}
\end{eqnarray}  
where $f$ is determined by substituting a designated exact solution into the LHS of  \eqref{eq:sin_1} and $g$ is the exact solution at $t=0$. 
In the test, we consider the problem in 1D with exact solution $u= 0.5\sin(\pi x)(10\tanh(x-t)+1)$ on the domain $\Om = [0 ,1]$ during the time interval $t\in [0, 1]$. 
The exact solution is also our benchmark when calculating errors.
The weak formulation is of the form \eqref{eq:pdewf1} with $a(u, v) = (\nabla u, \nabla v)$ and $N(u) = \sin(u)$. 
We investigate the convergence properties of the POD-FEIC solution w.r.t. mesh size $h$ and the number of POD basis $r$, respectively. 

To check the approximation order of the POD-FEIC solution w.r.t. $h$, we collect the finite element solution of the original system with linear elements (m = 1) and quadratic elements (m=2) respectively. 
Backward-Euler method is used for the time integration with a small time step $\Delta t = 1\times 10^{-6}$. 
The number of POD basis functions are chosen such that $\sum_{j> r}\lambda_j <1\times 10^{-7}$. 
In this way, the spatial discretization error dominates the whole approximation property.  

The errors in both $\mathcal{E}_0$ and $\mathcal{E}_1$ norms are shown in Table \ref{table:sin1d_h}. 
Linear regressions indicate that, for linear elements, the order of convergence is 1.97 in $\mathcal{E}_0$ norm and 0.98 in $\mathcal{E}_1$ norm; 
for quadratic elements, the error convergence order is $2.94$ in $\mathcal{E}_0$ norm and 1.95 in $\mathcal{E}_1$ norm. 
The approximation orders are close to the optimal values in Theorem \ref{thm:err}. 
Since the error analysis is asymptotic, as $h$ decreases, the order approaches the optimal values (order $m+1$ in $\mathcal{E}_0$ norm and order $m$ in $\mathcal{E}_1$ norm). 

\begin{table}[htb]
\centering
\caption{The approximation rate of the POD-FEIC w.r.t. $h$}\label{table:sin1d_h}
\centering
\begin{tabular}{c|cc|cc}
\hline
\multirow{2}{*}{$h$}& \multicolumn{2}{c}{$m = 1$}&\multicolumn{2}{|c}{$m = 2$}\\
\cline{2-5}
{}&$\mathcal{E}_0(\urh, u)$ &$\mathcal{E}_1(\urh, u)$ & $\mathcal{E}_0(\urh, u)$ & $\mathcal{E}_1(\urh, u)$ \\
\hline
 1/8   & 2.09e-02 & 5.41e-01 & 2.41e-3 & 1.26e-1 \\
 1/16 & 5.50e-03 & 2.81e-01 & 3.30e-4 & 3.42e-2 \\ 
 1/32 & 1.39e-03 & 1.42e-01 & 4.22e-5 & 8.76e-3 \\ 
 1/64 & 3.50e-04 & 7.12e-02 & 5.31e-6 & 2.20e-3 \\
 \hline
 order& $h^{1.97}$ & $h^{0.98}$ & $h^{2.94}$ & $h^{1.95}$ \\
\hline
\end{tabular}
\end{table}

To check the approximation order of the POD-FEIC solution w.r.t. $r$, we collect the finite element solution of the original system when linear elements (p = 1) and quadratic elements (p=2) are utilized for spatial discretization, respectively, and backward-Euler method for the time integration. 
The mesh size $h=1/64$ and the time step $\Delta t = 1\times 10^{-6}$ are fixed.
The number of POD basis functions are chosen so that $\sqrt{\sum_{j> r}\lambda_j}$ decays by a factor of $2$.  

The errors in both $\mathcal{E}_0$ and $\mathcal{E}_1$ norms when linear elements are used are shown in Table \ref{table:sin1d_r_p1} and those for quadratic elements are listed in Table \ref{table:sin1d_r_p2}. 
Linear regressions indicate that the convergence order of error in $\mathcal{E}_1$ norm w.r.t. $\sqrt{\sum_{j> r}\|\varphi_j\|_{H^1}^2\,  \lambda_j}$ is 0.96 for linear elements and $1.00$ for quadratic elements. 
Wherein, the approximation order for quadratic elements is linear,  which is optimal as indicated in Theorem \ref{thm:err}. 
That of linear elements is close to 1 but slightly smaller than 1, which is because the discretization error tends to dominate the error as $r$ increases. 
At the same time, note that the approximation order of error in $\mathcal{E}_0$ norm w.r.t. $\sqrt{\sum_{j> r}\lambda_j}$ is 1 for both linear and quadratic elements. 
Although not proven theoretically, the error in $\mathcal{E}_0$ norm also converges optimally. 

\begin{table}[htb]
\centering
\caption{The approximation rate of the POD-FEIC w.r.t. $r$ when linear finite elements are used.}\label{table:sin1d_r_p1}
\centering
\begin{tabular}{c|cc|cc}
\hline
\multirow{2}{*}{$r$}& \multicolumn{4}{c}{m = 1}\\ 
\cline{2-5}
{}& $\sqrt{\sum_{j> r}\lambda_j}$ & $\mathcal{E}_0(\urh, u)$ & $\sqrt{ \sum_{j> r}\|\varphi_j\|_{H^1}^2 \lambda_j}$  & $\mathcal{E}_1(\urh, u)$ \\
\hline
 3 & 4.13e-02 & 4.60e-02 & 6.26e-01 & 6.21e-01  \\  
 4 & 2.45e-02 & 2.74e-02 & 4.48e-01 & 4.47e-01  \\  
 6 & 9.07e-03 & 1.02e-02 & 2.23e-01 & 2.31e-01  \\  
 7 & 5.59e-03 & 6.29e-03 & 1.55e-01 &1.69e-01   \\  
 \hline
 order& - & $1.00$ & - & $0.96$ \\
\hline
\end{tabular}
\end{table} 

\begin{table}[htb]
\centering
\caption{The approximation rate of the POD-FEIC w.r.t. $r$ when quadratic finite elements are used.}\label{table:sin1d_r_p2}
\centering
\begin{tabular}{c|cc|cc}
\hline
\multirow{2}{*}{$r$}& \multicolumn{4}{c}{m = 2}\\
\cline{2-5}
{}& $\sqrt{\sum_{j> r}\lambda_j}$ & $\mathcal{E}_0(\urh, u)$ & $\sqrt{ \sum_{j> r}\|\varphi_j\|_{H^1}^2 \lambda_j}$  & $\mathcal{E}_1(\urh, u)$ \\
\hline
 3 & 4.15e-02 & 4.60e-02 & 6.28e-01 & 6.19e-01  \\
 4 & 2.46e-02 & 2.74e-02 & 4.50e-01 & 4.43e-01  \\
 6 & 9.18e-03 & 1.02e-02 & 2.25e-01 & 2.21e-01  \\
 7 & 5.68e-03 & 6.29e-03 & 1.57e-01 & 1.54e-01  \\
 9 & 2.20e-03 & 2.41e-03 & 7.44e-02 & 7.36e-02  \\
 \hline
 order& - & $1.00$ & - & $1.00$ \\
\hline
\end{tabular}
\end{table} 

\section{The Combination with the DEIM}\label{sec:FEIC-DEIM}
The discrete empirical interpolation method has been successfully applied in many nonlinear ROMs to reduce the computation complexity of the nonlinear terms \cite{chaturantabut2012state,chaturantabut2010nonlinear,chaturantabut2011application,hinze2012discrete,kellems2010morphologically,cstefuanescu2012pod}. 
For a general nonlinear function $N(u(x, t))$, the DEIM employs the ansatz 
\begin{equation}
N(u) = \sum\limits_{j=1}^{\widehat{r}} \psi_j(x) c_j(t), 
\end{equation}
where $\psi_j(x)$ is the $j$-th nonlinear POD basis obtained by applying the POD method on the nonlinear snapshots and 
$\widehat{r}$ is the rank of the nonlinear POD basis. 
Based on the nonlinear POD basis vectors ${\bf \Psi} = [\psi _1, \ldots, \psi_{\widehat{r}}]$, 
the DEIM optimally selects a set of interpolation points $\wp := [\wp_1, \ldots, \wp_p]$ and approximates the nonlinear function by 
\begin{equation}
{N}(u) \approx {\bf \Psi}(\bP^\intercal {\bf \Psi})^{-1} \bP^\intercal {\bf N}(u),
\label{deim}
\end{equation} 
where $\bP$ is the matrix for selecting the corresponding $p$ indices $\wp_1, \ldots, \wp_p$. 

However, when the FE method is used for a spatial discretization, the nonlinear snapshot becomes $(N(u(x, t)), h)$ as in the weak formulation \eqref{eq:fem1}. 
Therefore, generating nonlinear snapshots requires the inner product to be fulfilled, which costs lots of off-line time. 
In cases such as complex flows are studied, many interpolation points might be necessary to obtain a good approximation of the nonlinear terms. 
Since on-line simulations also need to evaluate the inner product over the elements sharing the selected DEIM points,  the on-line time increases.  
These issues represent the main computational hurdles for applying DEIM in the FE setting, which, however, can be easily overcome by the POD-FEIC approach we proposed in Section \ref{sec:feic}.  
Indeed, in the FEIC, the nonlinear snapshot can be chosen to be the value vector of the nonlinear function, $N({\bf Q}\, {\bf a}(t))$, thus, no any evaluation of inner product needed. 
Replacing the nonlinear function with the DEIM approximation \eqref{deim} in the POD-FEIC \eqref{eq:podgfeic}, we get the POD-FEIC-DEIM model, which has a more efficient approximation of the nonlinear term 
\begin{equation}
\mN_{FEIC-DEIM}= {\bf Q}^\intercal\, {\bf M}^h\,  {\bf \Psi}(\bP^\intercal {\bf \Psi})^{-1} \bP^\intercal {\bf N}\big({\bf Q}\, {\bf a}(t)\big).
\label{feic_deim}
\end{equation}

It is seen that, in on-line simulations, one only need to calculate the nonlinear functions at $p$ selected DEIM points, which doesn't involve any numerical quadratures. Therefore, the POD-FEIC-DEIM improves the computational efficiency over the POD-FEIC, which also outperforms the POD-FEM.  
We will demonstrate the effectiveness of the new approach by considering the first two examples in Section \ref{sec:numeric} again. 

\paragraph{FitzHugh-Nagumo System} 
We revisit the 1D F-N model used in Section \ref{sec:validation}. 
Totally, 801 nonlinear snapshots of $N(v) = \frac{1}{\mu} v(v-0.1)(1-v)$ are generated and used in the DEIM. 
The nonlinear function is then approximated by the DEIM basing on $p$ selected interpolation points. 
In this test, we consider the POD-FEIC-DEIM generated by the first $r=5$ POD basis functions and investigate the numerical performance of the new model by varying the number of interpolation points. 

Errors in $\mathcal{E}_0$ norm and the CPU time for simulations are listed in Table \ref{table:fn_feic_deim}. 
As $p$ increases, the POD-FEIC-DEIM result approaches to that of the POD-FEIC 
($\mathcal{E}_0(\urh, u^h)= 6.60\times 10^{-4}$ when $r=5$). 
It is also seen from Table \ref{table:fn_feic_deim} that when $p=3$, the POD-FEIC-DEIM error is 9 times larger than that of the POD-FEIC. 
However, as the limit cycle on the $v-w$ plane shown in Figure \ref{Fig:fn_feic_deim}, 
the difference mainly occurs at the beginning of the simulation, from $t=0$ to $t=1$. 
After the transient interval, the limit cycles of the POD-FEIC and the POD-FEIC-DEIM coincide with each other.  
What's more, the CPU time of the POD-FEIC-DEIM is lower than that of the POD-FEIC, although not significantly due to the small size of the tested problem.  

\begin{table}[htb]
\centering
\caption{
Errors and the CPU time of the POD-FEIC-DEIM model with $r= 5$ POD basis and $p$ interpolation points for nonlinear function approximation.  
}\label{table:fn_feic_deim}
\centering
\begin{tabular}{ccc}
\hline
$p$&$\mathcal{E}_0(\urh, u^h)$ & CPU time  \\
\hline
 3 & 5.94e-03& 1.02  \\
 5 & 3.00e-03& 1.05   \\
 7 & 9.66e-04& 1.09   \\
 9 & 6.63e-04& 1.09   \\
\hline
\end{tabular}
\end{table} 
\begin{figure}[htb]
\centering
\includegraphics[width=0.5\textwidth]{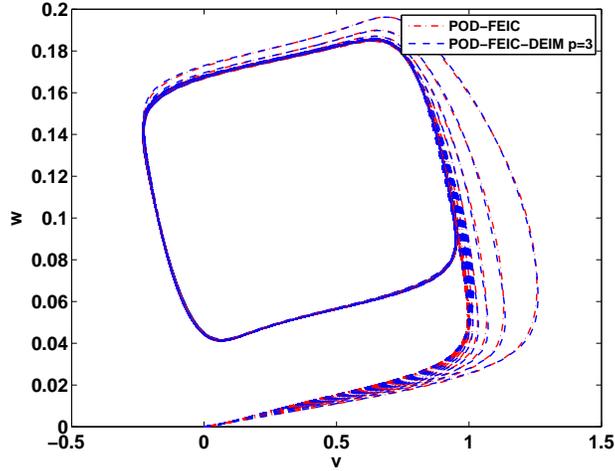}
\caption{
The limit cycle on the $v-w$ plane: the POD-FEIC results ($r=5$) and the POD-FEIC-DEIM approximation ($r=5$ and $p=3$).  
}\label{Fig:fn_feic_deim}
\end{figure} 

\paragraph{Buckley-Leverett Equation}  
We also consider the 2D BLE problem utilized in Section \ref{sec:validation}. 
Totally, 51 nonlinear snapshots of $f_1(u)$ and $f_2(u)$ are generated, and then used in the DEIM for selecting $p$ interpolation points, respectively. 
We also consider the POD-FEIC-DEIM model generated by the first $r=5$ POD basis functions and investigate its numerical behavior by varying $p$. 

Errors in $\mathcal{E}_0$ norm and the CPU time for simulations are listed in Table \ref{table:bl_feic_deim}. 
Note that for the POD-FEIC when $r=5$, the error $\mathcal{E}_0(\urh, u^h) = 4.43\times 10^{-3}$ and CPU time equals 116 $s$,  the POD-FEIC-DEIM model achieves a close accuracy by only using $p=5$ interpolation points, which also improves the computational efficiency of the POD-FEIC by 4 times. 
Figure \ref{Fig:bl_error} shows the distribution of  20 firstly selected DEIM interpolation points (left), and the difference between the POD-FEIC and the POD-FEIC-DEIM when $p = 5$ (middle) and $p=20$ (right).

\begin{table}[htb]
\centering
\caption{
Errors and the CPU time of the POD-FEIC-DEIM model with $r= 5$ POD basis and $p$ interpolation points.  
}\label{table:bl_feic_deim}
\centering
\begin{tabular}{ccc}
\hline
$p$&$\mathcal{E}_0(\urh, u^h)$ & CPU time  \\
\hline
 5  & 4.81e-03 & 27.79  \\
10 & 4.64e-03 & 27.89   \\
15 & 4.59e-03 & 28.00   \\
20 & 4.54e-03 & 28.57   \\
\hline
\end{tabular}
\end{table}

\begin{figure}[htb]
\centering
\begin{minipage}[ht]{0.27\linewidth}
\includegraphics[width=1\textwidth]{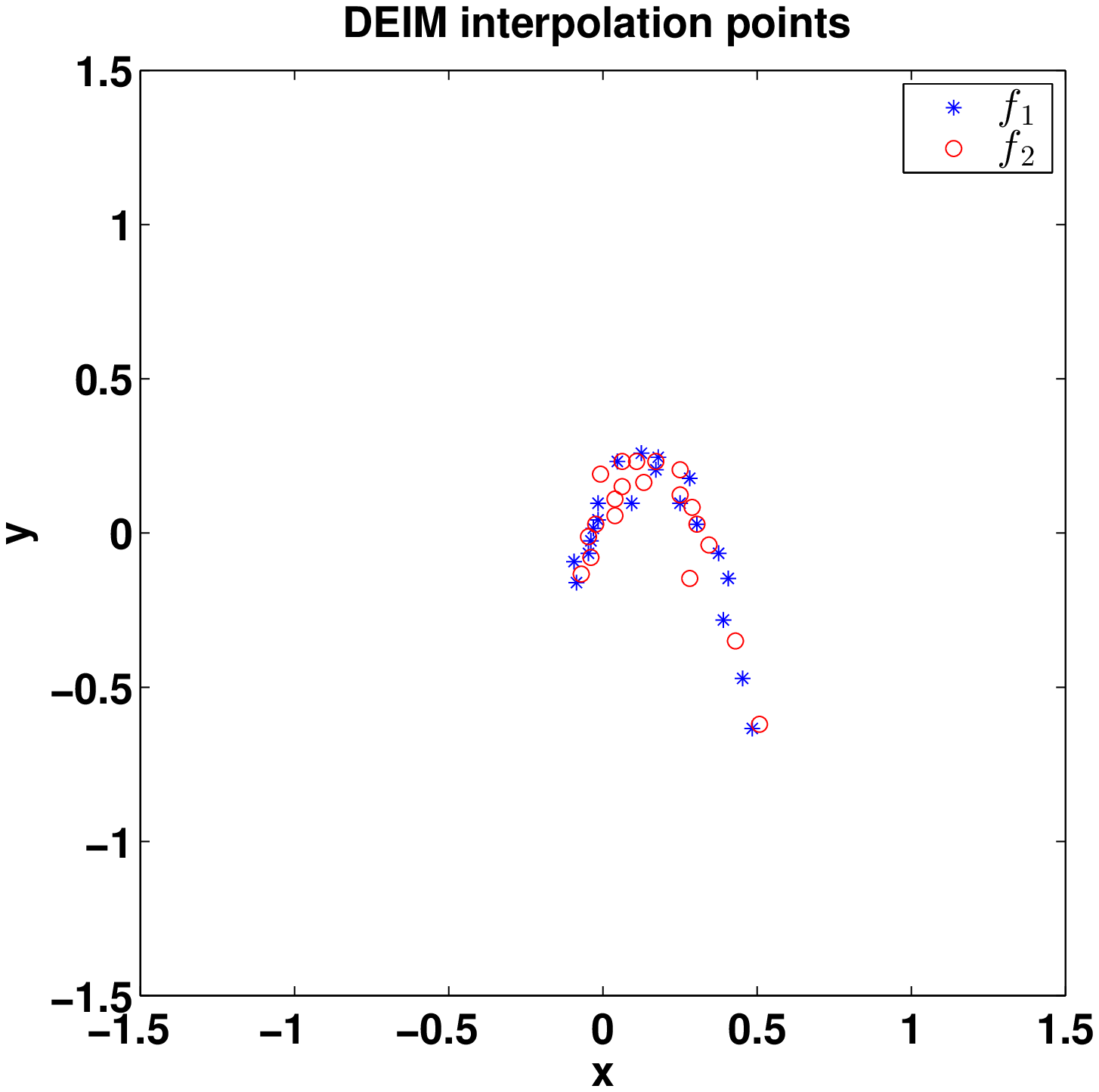}
\end{minipage}
\begin{minipage}[ht]{0.33\linewidth}
\includegraphics[width=1\textwidth]{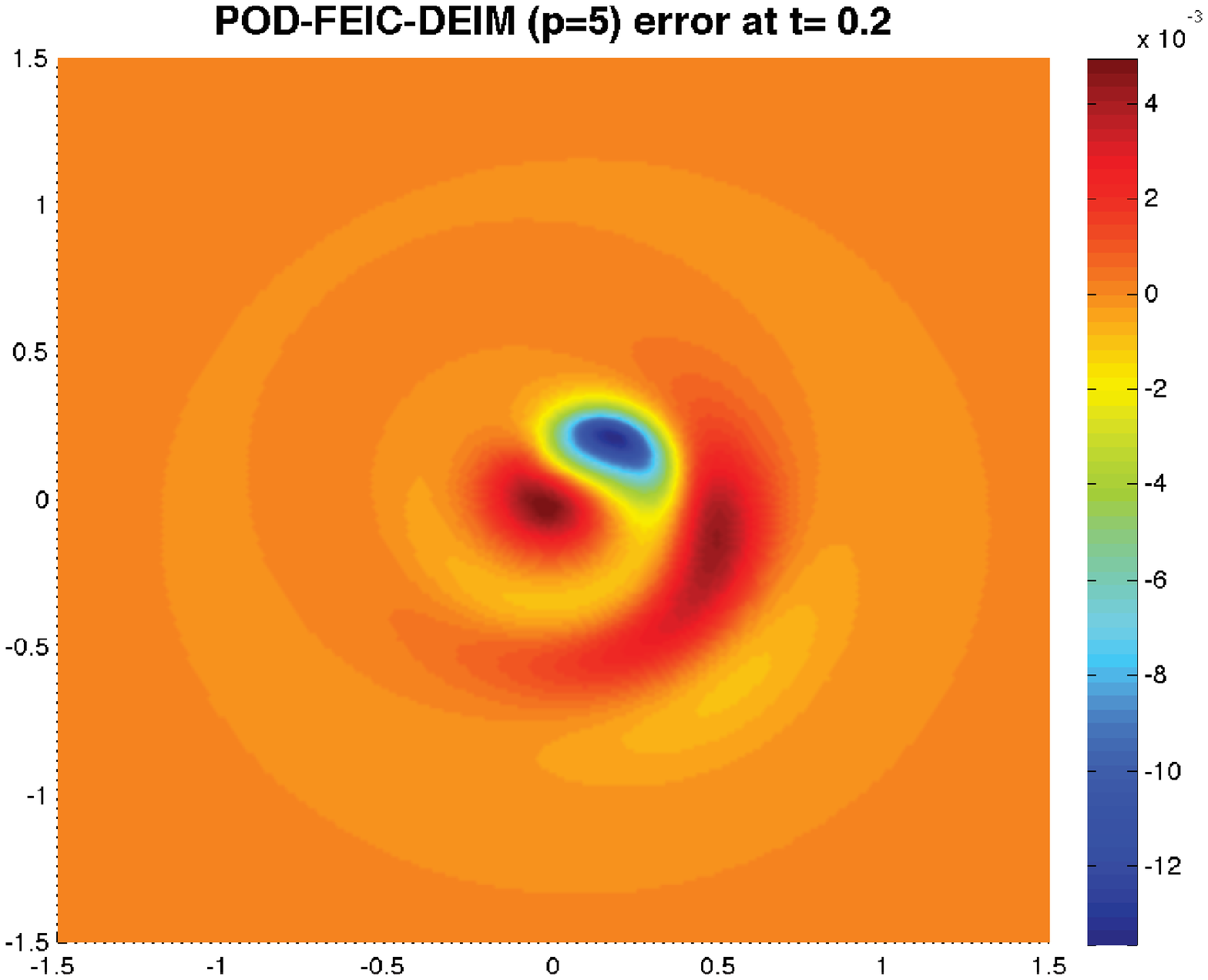}
\end{minipage}
\begin{minipage}[ht]{0.33\linewidth}
\includegraphics[width=1\textwidth]{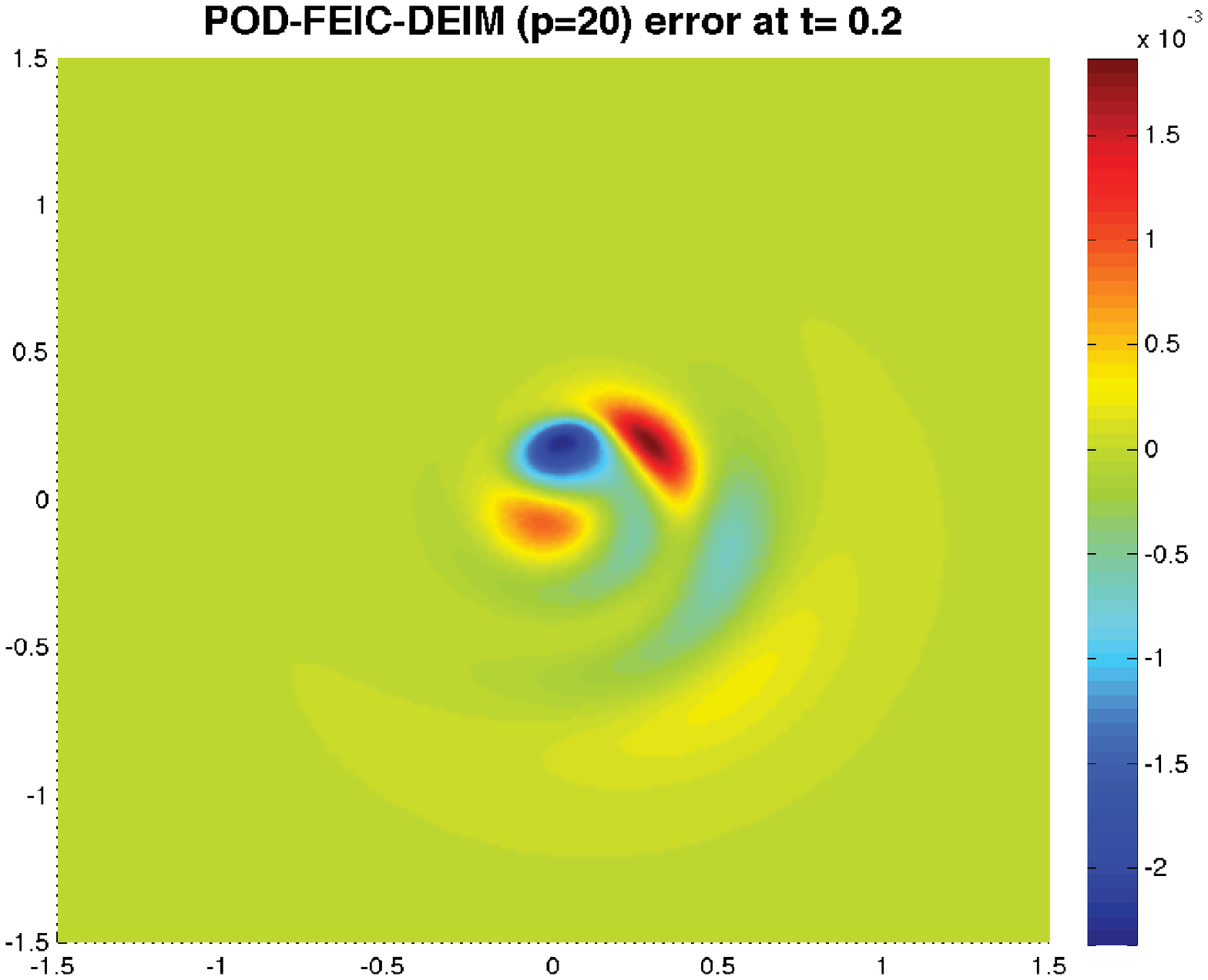}
\end{minipage}
\caption{
Distribution of DEIM interpolation points for $f_1(u)$ and $f_2(u)$ (left), the difference between POD-FEIC and POD-FEIC-DEIM at $t=0.2$ when $p=5$ (middle) and $p=20$ (right).  
}\label{Fig:bl_error}
\end{figure}

\section{Conclusions}
As a first step of our investigations on efficient finite element discretization algorithms for nonlinear model reduction techniques, we develop the finite element method with interpolated coefficients for nonlinear POD-ROMs. 
Comparing with the standard finite element discretization, the proposed approach is computationally more efficient because the coefficient matrices in the nonlinear terms can be precomputed and there is no any numerical quadratures needed during the simulation.   
The proposed method also achieves the same accuracy as that of the standard finite element discretization when nonlinear functions satisfy certain smoothness assumptions. 
The proposed approach is a more suitable base for the discrete empirical interpolation method. 
Combining the FEIC with the DEIM will further reduce the computational complexity for evaluating nonlinear terms. 


We plan to continue investigating several research avenues:  
we will first extend the proposed approach to nonlinear closure models we developed for complex flows in \cite{wang2011two,wang2011closure}. Some realistic engineering application problems will be tested.  
Second, we will develop a rigorous error analysis of the model reduction approach that combining the FEIC and the DEIM. 
Finally, we plan to design an adaptive algorithm for the proposed method, which will be used to supervise the approximation error control. 
\section*{Acknowledgments}
The author is grateful to Dr. Saifon Chaturantabut for sharing the Matlab implementation codes of the discrete empirical interpolation method. 
\bibliographystyle{plain}
\bibliography{./../../../D_bibliography}
\end{document}